\newtheorem{Lemma}     {lemma}[section]
\newtheorem{Theorem}   {theorem}
\newtheorem{Proposition}      [Lemma]{Proposition}
\newtheorem{Corollary}    [Lemma]{Corollary}
\newtheorem{Remark}    [Lemma]{Remark}
\definecolor{orange}{rgb}{0,0,0}
\newcommand{\alert}[1]{{\color{orange}{#1}}}
\newcommand{\TT}{\mathrm T}
\newcommand{\nada}[1]   {}
\newcommand{\R}         {\ensuremath{\mathbb R}}
\newcommand{\cd}        {{c^\dagger}}
\newcommand{\eps}       {\varepsilon}
\newcommand{\Om}        {\Omega}
\begin{document}

\title{Global exponential convergence to variational traveling waves
  in cylinders}

\author{C. B. Muratov\thanks{Department of Mathematical Sciences, New
    Jersey Institute of Technology, Newark, NJ 07102, USA ({\tt
      muratov@njit.edu}). The work of this author was supported by NSF
    via grants DMS-0718027 and DMS-0908279.} \and
  M. Novaga\thanks{Dipartimento di Matematica, Universit\`a di Padova,
    Via Trieste 63, 35121 Padova, Italy ({\tt novaga@dm.unipd.it}).}}

\maketitle

\begin{abstract}
  We prove, under generic assumptions, that the special variational
  traveling wave that minimizes \alert{the exponentially weighted
    Ginzburg-Landau} functional \alert{associated with scalar
    reaction-diffusion equations in infinite cylinders} is the
  long-time attractor for the solutions of the initial value problems
  with front-like initial data. The convergence to this traveling wave
  is exponentially fast. The obtained result is mainly a consequence
  of the gradient flow structure of the considered equation in the
  exponentially weighted spaces and does not depend on the precise
  details of the problem. It strengthens our earlier generic
  propagation and selection result for ``pushed'' fronts.
\end{abstract}

\begin{keywords} 
  reaction-diffusion equations, front propagation, nonlinear
  stability, front selection, exponentially weighted spaces
\end{keywords}

\begin{AMS}
  35B40, 35C07, 35K57, 35A15
\end{AMS}

\pagestyle{myheadings} \thispagestyle{plain} \markboth{C. B. MURATOV
  AND M. NOVAGA}{CONVERGENCE TO VARIATIONAL TRAVELING WAVES}

\numberwithin{equation}{section}

\section{Introduction}

One of the most fundamental problems in the theory of
reaction-diffusion equations has to do with the long-time asymptotic
behavior of solutions of the associated initial value problem on
unbounded domains \cite{fife79,volpert,xin00}. In its simplest form,
it may be formulated for a one-dimensional scalar reaction-diffusion
equation
\begin{eqnarray}
  \label{eq:1d}
  u_t = u_{xx} + f(u), \qquad u: \mathbb R \times \mathbb R^+ \to [0,
  1], 
\end{eqnarray}
with an unbalanced bistable nonlinearity $f(u)$, i.e., when $f$ is a
smooth function which has precisely three non-degenerate zeros in $[0,
1]$, with
\begin{eqnarray}
  \label{eq:8}
  f(0) = f(1) = 0, \qquad f'(0) < 0, \quad f'(1) < 0, \qquad \int_0^1
  f(u) du > 0,  
\end{eqnarray}
e.g. $f(u) = u (1 - u) (u - \tfrac14)$. For such an equation, it was
first proved by Kanel' that initial data $u(x, t) = u_0(x)$ with the
property that $u_0(x) = 0$ for all $x > b$, $u_0(x) = 1$ for all $x <
a$, and $u_0(x)$ is monotone decreasing for $x \in (a, b)$, with some
$-\infty < a < b < +\infty$, converges uniformly to a (unique up to
translations) traveling wave solution, i.e., a solution $u(x, t) =
\bar u(x - ct)$ of (\ref{eq:1d}), with some uniquely determined speed
$c > 0$, connecting monotonically $u = 0$ at $x = +\infty$ with $u =
1$ at $x = -\infty$, in a reference frame moving with speed $c$
\cite{kanel60,kanel62}. In a subsequent work, Fife and McLeod extended
this result to a much wider class of initial data and also showed that
the convergence is exponentially fast \cite{fife77}. Qualitatively,
the conclusion of these analyses is that the solution of the
considered initial value problem with front-like initial data
converges exponentially fast to a traveling front invading the ``less
stable'' equilibrium $u = 0$ by a ``more stable'' equilibrium $u =
1$. We note that a similar result was proved for a certain class of
monostable nonlinearities \cite{rothe81}, but it does {\em not} hold
(in the reference frame moving with constant speed and in the sense of
exponential convergence) in the case of the Fisher's equation
\cite{kolmogorov37,uchiyama77,bramson,kirchgaessner92}.

In the multi-dimensional setting, these kinds of results were
\alert{subsequently} obtained for initial boundary value problems for
equations in infinite cylindrical domains:
\begin{equation}\label{pde}
u_t = \Delta u + f(u, y), \qquad u(x, 0) = u_0(x),
\end{equation}
where $u:\Sigma \times \mathbb R^+ \to \mathbb R$, $\Sigma = \Omega
\times \mathbb R \subset \mathbb R^n$, $\Omega \subset \mathbb
R^{n-1}$ is a bounded domain with sufficiently smooth boundary,
$f:\mathbb R \times \Omega \to \mathbb R$ is a nonlinear reaction
term, with either Neumann or Dirichlet boundary conditions. By $x =
(y, z) \in \Sigma$, we always denote a point with coordinate $y \in
\Omega$ on the cylinder cross-section and $z \in \mathbb R$ along the
cylinder axis. More generally, one can consider either Dirichlet or
Neumann boundary conditions on different connected portions of
$\partial\Omega$:
\begin{equation}
  \label{bc}
  u \bigl|_{\partial \Sigma_\pm} = 0, \qquad \nu \cdot
  \nabla u \bigl|_{\partial \Sigma_0} = 0,
\end{equation}
where $\partial \Sigma_\pm = \partial \Omega_\pm \times \mathbb R$ and
$\partial \Sigma_0 = \partial \Omega_0 \times \mathbb R$, allowing for
more than one connected component for $\partial \Omega$ (for
motivation and further discussion of the boundary conditions, see
\cite{mn:cms08,mn:cvar08}). \alert{Note that transverse advection by a
  potential flow can also be straightforwardly included in the present
  treatment, as was done in \cite{mn:cms08,mn:cvar08}. For simplicity
  of presentation, in this paper we do not consider the advection term
  and concentrate on pure reaction-diffusion problems.}

Without loss of generality, we may assume
that $u = 0$ is a trivial solution of (\ref{pde}) and consider
traveling waves that invade the $u = 0$ equilibrium, i.e., the
solutions of (\ref{pde}) and (\ref{bc}) in the form $u(x, t) = \bar
u(y, z - ct)$, for some $c > 0$, which converge to zero uniformly as
$z \to +\infty$. These solutions satisfy the elliptic equation
\begin{eqnarray}\label{trav}
  \Delta \bar u + c \bar u_z + f(\bar u, y) = 0,
\end{eqnarray}
together with the respective boundary conditions in (\ref{bc}) (by a
solution, we mean a pair $(c, \bar u)$, with $\bar u \in C^2(\Sigma)
\cap C^1(\overline \Sigma)$ being a classical solution of \eqref{trav}
and \eqref{bc}). We refer to \cite{berestycki92,volpert,mn:cms08} and
references therein, for a comprehensive treatment of the subject of
traveling waves. In particular, under certain specific assumptions one
obtains uniqueness (up to translations) and global exponential
convergence to these solutions for the initial value problem with
front-like initial data \cite{mallordy95,roquejoffre94,roquejoffre97}
(see \alert{the end of} Sec. \ref{sec:vari-form-main} for a more
detailed discussion and a comparison with the present results). This
property, therefore, indicates the ubiquitous role of the traveling
fronts in the behavior of the solutions of (\ref{pde}).

Since in general (\ref{trav}) may have many solutions, an important
question is which of these solutions, if any, can be a long-time limit
of the evolution governed by (\ref{pde}), for a given class of initial
data. As was recently pointed out in \cite{m:dcdsb04}, in the case of
initial data with sufficiently fast exponential decay at $z = +\infty$
the relevant class of traveling wave solutions consists of the
so-called variational traveling waves, even for systems of
reaction-diffusion equations in which the nonlinearity is a
gradient. More recently, we showed that a special class of variational
traveling wave solutions that minimize \alert{the exponentially
  weighted Ginzburg-Landau} functional (see
Sec. \ref{sec:vari-form-main} for precise definitions and statements)
are relevant for the long-time behavior of the initial value problem
in the sense of propagation of the leading edge and, in particular,
determine the propagation speed for front-like initial data
\cite{mn:cms08}. It is then natural to ask whether these special
traveling fronts are also the long-time attractors for the solutions
of \eqref{pde} in the moving reference frame. In this paper, we give a
positive answer to this question under a few extra non-degeneracy
assumptions to those of \cite{mn:cms08} which hold generically in the
considered class of problems.

Our paper is organized as follows. In Sec. \ref{sec:vari-form-main},
we introduce the variational formulation for the traveling waves of
interest, state the main result and compare it with those available in
the literature. In Sec. \ref{secvar}, we list and discuss our
assumptions, as well as state a number of auxiliary results used in
the paper. In Sec. \ref{secsta}, we perform local stability analysis
of the traveling waves of interest in the exponentially weighted
Sobolev spaces, and in Sec. \ref{sec:main} we prove convergence to the
traveling wave in the large, completing the proof of the main theorem.

\smallskip

\paragraph{Some notation} For every $-\infty \leq a < b \leq +\infty$
and $c > 0$, the symbol $L^2_c(\Omega \times (a, b))$ denotes the
Hilbert space of all functions $u : \Omega \times (a, b) \to \mathbb
R$ with $||u||_{L^2_c(\Omega \times (a, b))}^2 = \int_a^b \int_\Omega
e^{cz} u^2(y, z) \, dy \, dz$. Likewise, by $L^2_c(\Sigma)$,
$H^1_c(\Sigma)$ and $H^2_c(\Sigma)$, we denote the spaces of functions
which are square integrable with the above exponential weight,
together with their first and second derivatives, respectively, in
$\Sigma$. We also use the symbol $C_b(A)$ to denote the space of
bounded continuous function on $A$ equipped with the sup-norm.  In all
statements and proofs the constants are always assumed to implicitly
depend on $f$, $\Omega$ and the choice of the boundary conditions.  In
the proofs the numbers $C, M$, etc., may change from line to line. We
will also use the symbol $\TT_R$ to denote a translation by $R$ along
the $z$-axis, i.e., $\TT_R u(\cdot, z) = u(\cdot, z - R)$.

\section{Variational formulation and main result}
\label{sec:vari-form-main}

~

\alert{The fact that \eqref{trav} possesses a variational structure in
  exponentially weighted Sobolev spaces was, to our knowledge, first
  pointed out by Heinze \cite{heinze89,heinze} (see also
  \cite{fife77,volpert,roquejoffre94,lmn:cpam04,gallay07,mn:cms08} in
  the context of \eqref{pde}, and
  \cite{m:dcdsb04,lmn:arma08,risler08,gallay09} in the context of its
  extensions).}  As we \alert{recently} showed in \cite{mn:cms08}, for
scalar reaction-diffusion equations considered here the solution of
\eqref{trav} which determines the asymptotic speed of propagation with
front-like initial data is a special variational traveling wave which
is the minimizer of the \alert{the exponentially weighted
    Ginzburg-Landau} functional
\begin{eqnarray}
  \label{eq:Phic}
  \Phi_c[u] := \int_\Sigma e^{cz} \left( \frac{1}{2} |\nabla u|^2 +
    V(u, y) \right) \, dx  
    \qquad c>0,
\end{eqnarray}
where 
\begin{eqnarray}
  \label{eq:V}
  V(u,y) = -\int_0^u f(s,y) \chi_{[0,1]}(s) \, ds, \qquad
  \chi_{[0,1]}(s) = 
  \begin{cases}
    1, & s \in [0, 1] \\
    0, & s \not\in [0, 1]
  \end{cases},
\end{eqnarray}
over all functions lying in the exponentially weighted Sobolev space
$H^1_c(\Sigma)$. We point out that such a minimizer can only exist for
a specific value of $c=c^\dag > 0$ (see Theorem \ref{t:min} below).
Under quite general assumptions on the potential $V$, in \cite[Theorem
5.8]{mn:cms08} we proved that the asymptotic speed of propagation of
solutions to \eqref{pde} is precisely given by $c^\dag$, assuming that
the initial datum is front-like, i.e., if it stays sufficiently far
away from zero as $z \to -\infty$ and decays sufficiently fast to zero
as $z\to +\infty$.  In this paper, we discuss the local and global
stability of such variational traveling waves.

\alert{Our main result is contained in the following theorem (for the
  details of the definitions and hypotheses, see Sec. \ref{secvar}):}

\begin{Theorem}\label{t:main}
  Assume hypotheses (H1)--(H3) and (N1)--(N2) are satisfied, and let
  $c^\dag$, $\bar u$, $v$ be as in Theorem \ref{t:min}. Then there
  exist $\alpha > 0$ and $\sigma > 0$, such that if $u_0 \in
  C^0(\overline \Sigma) \cap W^{1,\infty}(\Sigma) \cap
  L^2_\cd(\Sigma)$ satisfies $0\le u_0\le 1$ and
  \begin{equation}\label{inidat}
    \liminf_{z\to -\infty}u_0(\cdot,z)\ge v - \alpha \qquad\ {\rm
      uniformly\ in\ }\Omega\,, 
  \end{equation} 
  there exists $R_\infty \in \mathbb R$, such that if $u$ is the
  solution of (\ref{pde}) and (\ref{bc}) with initial datum $u_0$,
  then
  \begin{eqnarray}\label{eqmain}
    ||\TT_{ R_\infty-c^\dag t} u(\cdot, t) - \bar
    u||_{H^2_\cd(\Sigma)} \leq C e^{-\sigma t} 
  \end{eqnarray}
  for every $t \geq t_0$, with arbitrary $t_0 > 0$ and some $C > 0$
  independent of $t$.
\end{Theorem}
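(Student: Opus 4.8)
The plan is to exploit the gradient-flow structure of \eqref{pde} in the exponentially weighted space $L^2_\cd(\Sigma)$, for which $\Phi_\cd$ is a Lyapunov functional, and to combine a global "trapping" estimate with the local exponential stability established in Sec. \ref{secsta}. First I would pass to the moving frame by setting $w(y,z,t) = u(y, z - \cd t, t)$, so that $w$ solves the equation whose stationary solution is $\bar u$ and which is the $L^2_\cd$-gradient flow of $\Phi_\cd$; along this flow $t \mapsto \Phi_\cd[w(\cdot,t)]$ is nonincreasing, with $\tfrac{d}{dt}\Phi_\cd[w] = -\|w_t\|_{L^2_\cd(\Sigma)}^2$. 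The hypothesis \eqref{inidat} together with $0 \le u_0 \le 1$ lets me sandwich the initial datum, for $z$ very negative, between translates of sub- and supersolutions built from $v - \alpha$ and $1$ (or from translates of $\bar u$ itself), and by the comparison principle this sandwiching persists for all positive times; this is what produces the a priori bound $w(\cdot,t) \in L^2_\cd(\Sigma)$ uniformly in $t$ and the existence of the limiting shift $R_\infty$.

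Next I would run the standard $\omega$-limit argument in the weighted topology. Parabolic smoothing plus the uniform $L^\infty$ and $L^2_\cd$ bounds give compactness of the trajectory $\{\TT_{R(t)} w(\cdot,t)\}$ in $H^2_\cd$ along suitable subsequences (after subtracting a time-dependent shift $R(t)$ chosen, e.g., to normalize the position of the front); the monotonicity of $\Phi_\cd$ forces every element of the $\omega$-limit set to be a critical point of $\Phi_\cd$, hence a variational traveling wave. Using the non-degeneracy hypotheses (N1)--(N2) and the characterization of $\bar u$ from Theorem \ref{t:min} as the (essentially unique) minimizer, I would argue that the only critical point reachable from front-like data at the relevant energy level is $\bar u$ (up to translation), so the $\omega$-limit set is exactly the translation orbit of $\bar u$ and $R(t) \to R_\infty$. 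This identifies $\TT_{R_\infty - \cd t} u(\cdot,t) \to \bar u$ in $H^2_\cd$.

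Finally, to upgrade convergence to the \emph{exponential} rate $e^{-\sigma t}$ in \eqref{eqmain}, I would invoke the local stability result of Sec. \ref{secsta}: there is a weighted $H^2_\cd$-neighborhood of $\bar u$ in which the linearization (after modding out the one-dimensional translation direction) has spectral gap $\sigma$, so solutions entering this neighborhood converge to some fixed translate of $\bar u$ at rate $e^{-\sigma t}$. The previous step guarantees the trajectory enters this neighborhood at some finite time $T$; applying local stability from time $T$ onward, together with parabolic regularity to cover $t \in [t_0, T]$, yields \eqref{eqmain} with a constant $C$ depending on $t_0$ but not on $t$.

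The main obstacle I anticipate is the identification step: showing that no critical point of $\Phi_\cd$ other than the minimizer $\bar u$ can lie in the $\omega$-limit set of a front-like solution. This requires care because $\Phi_\cd$ need not be bounded below a priori on all of $H^1_\cd$, the critical value must be pinned down using the energy dissipated along the flow, and one must rule out "mass escaping to $z = -\infty$" — i.e., the shift $R(t)$ must be shown to stay bounded. Controlling $R(t)$ and the far-field behavior as $z \to -\infty$ (using the bistable-type structure near $u = 1$ and the comparison bounds) is where the non-degeneracy hypotheses (N1)--(N2) and the front-like condition \eqref{inidat} will do the essential work.
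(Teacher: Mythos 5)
Your overall skeleton — gradient flow of $\Phi_\cd$ in $L^2_\cd(\Sigma)$ as Lyapunov functional, an $\omega$-limit/critical point identification, then the local stability theorem of Sec.~\ref{secsta} to get the exponential rate, plus parabolic regularity to upgrade to $H^2_\cd$ — is the same as the paper's. But the step you yourself flag as the main obstacle (no drift of the front, uniform $L^2_\cd$ bound, and tail control) is exactly where your proposal has a genuine gap, and your suggested fix does not work as stated. You cannot sandwich $u_0$ pointwise between translates of $\bar u$: the hypothesis only puts $u_0$ in $L^2_\cd(\Sigma)$, so $u_0$ need not lie below any translate of $\bar u$ for large $z$, and behind the front $u_0$ may exceed $\bar u$ (it only satisfies \eqref{inidat} with $v>\bar u$). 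A supersolution ``built from $1$'' gives no decay at $z\to+\infty$ and hence no control of the weighted norm; and a uniform $L^2_\cd$ bound alone does not give strong $L^2_\cd$ compactness of the trajectory (mass can escape to $z=+\infty$ under the growing weight), so your claimed ``compactness in $H^2_\cd$ along subsequences'' is unjustified. The paper's resolution is a specific device you are missing: it evolves the \emph{barrier solutions} with initial data $\min\{u_0(\cdot,\cdot-R),\bar u\}$ and $\max\{u_0(\cdot,\cdot+R),\bar u\}$, shows these initial data are $L^2_\cd$-close to $\bar u$ for $R$ large and $\alpha$ small, and applies Theorem~\ref{t:wnstab} to the barriers themselves; the comparison principle then traps $u$ between two solutions converging exponentially to translates of $\bar u$. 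This simultaneously bounds the front position, makes $\Phi_\cd[u(\cdot,t)]$ bounded below along the flow (your worry about unboundedness of $\Phi_\cd$ is settled this way, via the uniform $H^1_\cd$ bound), and supplies the $z\to+\infty$ tail estimate needed to turn weak $H^1_\cd$ convergence along $t_n$ into strong $L^2_\cd$ convergence.

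Two further points. First, the identification of the $\omega$-limit is easier than you anticipate: once the limit is a nontrivial critical point of $\Phi_\cd$ in $H^1_\cd(\Sigma)$ (nontriviality coming from the lower barrier), it is automatically a minimizer and hence a translate of $\bar u$ by the variational results quoted with Theorem~\ref{t:min}; no separate energy-level or (N1)--(N2)-based argument is needed there — those hypotheses enter through the local stability analysis. Second, when you ``apply local stability from time $T$ onward'' you must verify that $u(\cdot,T)$ satisfies the behind-the-front condition \eqref{inidat} required by Theorem~\ref{t:wnstab}; this is not automatic for the evolved solution, and the paper secures it with the time-independent subsolution $u^-$ constructed in Proposition~\ref{p:w2close} together with the lower barrier. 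Without these ingredients your argument does not close.
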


Note that by Proposition \ref{p:ini} below we know that $u(\cdot,t)$
is bounded in $W^{2,p}(\Omega\times [M,M+1])$ uniformly in $M \in
\mathbb R$ and $t\in [t_0,+\infty)$, for all $t_0>0$ and
$p<\infty$. Since this bound also applies to $\bar u$, from
\eqref{eqmain} we get the following

\begin{Corollary}
  \label{remlip}
  In the statement of Theorem \ref{t:main}, the inequality
  \eqref{eqmain} may be replaced with
  \begin{eqnarray}
    \label{eq:2}
    \|\TT_{R_\infty - \cd t} u(\cdot, t) - \bar
    u\|_{C^1(\overline\Omega\times [z_0, z_1])} \leq C e^{-\sigma t},  
  \end{eqnarray}
  for all $z_0 < z_1$, $t \geq t_0 > 0$, and some $C > 0$ independent
  of $t$ and $z_1$.
\end{Corollary}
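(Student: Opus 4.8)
The plan is to upgrade the weighted $H^2$-estimate \eqref{eqmain} of Theorem~\ref{t:main} to a local $C^1$-estimate by interpolating it against the $M$-uniform higher-order regularity supplied by Proposition~\ref{p:ini}. Write $w(\cdot,t):=\TT_{R_\infty-\cd t}u(\cdot,t)-\bar u$, so that \eqref{eqmain} reads $\|w(\cdot,t)\|_{H^2_\cd(\Sigma)}\le Ce^{-\sigma t}$ for $t\ge t_0$. For an integer $M$ put $Q_M:=\Omega\times(M,M+1)$. Since $e^{\cd z}\ge e^{\cd M}$ on $Q_M$, restricting the weighted norm gives $\|w(\cdot,t)\|_{H^2(Q_M)}\le e^{-\cd M/2}\,\|w(\cdot,t)\|_{H^2_\cd(\Sigma)}\le C\,e^{-\cd M/2}\,e^{-\sigma t}$. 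On the other hand, Proposition~\ref{p:ini} bounds $u(\cdot,t)$ and $\bar u$ in $W^{2,p}(Q_M)$ uniformly in $M$ and in $t\ge t_0$, for every $p<\infty$; since a shift along $z$ maps $Q_M$ isometrically onto $Q_{M'}$, the translate $w(\cdot,t)$ inherits the bound, i.e. $\|w(\cdot,t)\|_{W^{2,p}(Q_M)}\le C_p$ with $C_p$ independent of $M$ and $t$.

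Next I would interpolate on each $Q_M$. Fix $\theta\in(0,2/n)$ and then $p<\infty$ so large that the exponent $q$ determined by $q^{-1}=\theta/2+(1-\theta)/p$ satisfies $q>n$ (possible since $q^{-1}\to\theta/2<1/n$ as $p\to\infty$). Applying the log-convexity of $L^r$-norms to $w$ and to each of its first- and second-order derivatives on $Q_M$ yields $\|w(\cdot,t)\|_{W^{2,q}(Q_M)}\le\|w(\cdot,t)\|_{W^{2,2}(Q_M)}^{\theta}\,\|w(\cdot,t)\|_{W^{2,p}(Q_M)}^{1-\theta}\le C\,e^{-\theta\cd M/2}\,e^{-\theta\sigma t}$, with $C$ independent of $M$ and $t$. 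Because $q>n$ and $Q_M$ is a fixed translate of $Q_0$ (hence Lipschitz), the embedding $W^{2,q}(Q_M)\hookrightarrow C^1(\overline{Q_M})$ holds with an $M$-independent norm, so $\|w(\cdot,t)\|_{C^1(\overline{Q_M})}\le C\,e^{-\theta\cd M/2}\,e^{-\theta\sigma t}$.

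Finally, given $z_0<z_1$, cover $\Omega\times[z_0,z_1]$ by the finitely many cylinders $Q_M$ with integer $M\in[\lfloor z_0\rfloor,\lceil z_1\rceil-1]$. Each such $M$ satisfies $M\ge z_0-1$, hence $e^{-\theta\cd M/2}\le e^{\theta\cd/2}e^{-\theta\cd z_0/2}$, and taking the maximum of the previous estimates over this family gives $\|w(\cdot,t)\|_{C^1(\overline\Omega\times[z_0,z_1])}\le C(z_0)\,e^{-\theta\sigma t}$ for all $t\ge t_0$, where $C(z_0)$ depends on $z_0$ but not on $t$ or $z_1$. This is \eqref{eq:2}, after relabeling the decay rate $\theta\sigma$ as $\sigma$.

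There is no genuine obstacle here: the only points to watch are the uniformity of every constant with respect to $M$ — automatic from the translation invariance of $\Sigma$ and of the bound in Proposition~\ref{p:ini} — and the independence of the final constant from $z_1$, which holds precisely because each local estimate is dominated by the same $M$-uniform quantity, so enlarging $[z_0,z_1]$ merely adds more pieces without worsening the bound. If one wishes to retain the \emph{exact} rate $\sigma$ instead of $\theta\sigma$, the interpolation step can be replaced by a standard parabolic bootstrap: $w$ solves the linear equation $w_t=\Delta w+\cd w_z+b(x,t)\,w$ with $\|b\|_{L^\infty}\le\|f_u\|_{L^\infty([0,1]\times\overline\Omega)}$ (using $0\le\tilde u,\bar u\le 1$), and iterating interior–boundary $L^p$ parabolic estimates on $Q_M\times(t-1,t)$ together with the parabolic Sobolev embedding promotes the $L^2$-in-space decay at rate $\sigma$ directly to $C^1$-in-space decay at the same rate $\sigma$.
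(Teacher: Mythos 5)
Your argument is correct and is essentially the paper's own: the paper derives the corollary precisely by combining the $H^2_\cd$-decay \eqref{eqmain} with the $W^{2,p}(\Omega\times[M,M+1])$ bounds of Proposition \ref{p:ini}, uniform in $M$ and $t\ge t_0$ (which also apply to $\bar u$), i.e.\ the interpolation-plus-Sobolev-embedding step you spell out. Your explicit handling of the $M$-uniformity, the $z_1$-independence of the constant, and the relabeling of the decay rate $\theta\sigma$ (or the alternative parabolic bootstrap to keep the exact rate) just fills in details the paper leaves implicit.
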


\alert{Let us point out that} the upper bound $u_0\le 1$ in Theorem
\ref{t:main} can be replaced with the condition $u_0(\cdot, z) \le
\bar v$ for every $z \in \mathbb R$, where $\bar v \in C^2(\Omega)
\cap C^1(\overline\Omega)$ satisfies
\begin{equation}\label{superpsi}
  \bar v > 0, \quad \Delta_y \bar v + f(\bar v,y)\le 0 \qquad {\rm
    for\  all\ }y\in 
  \Omega, 
\end{equation}
together with the boundary conditions from \eqref{bc}.  In this case,
the condition $f(1,y)\le 0$ in assumption (H1) below \alert{should} be
replaced by \eqref{superpsi}, the conditions in (H2) \alert{should
  hold for $0 \leq u \leq \bar v$, and the} definition of $V$
\alert{in \eqref{eq:V}} should be modified accordingly. We note that,
in particular, one can choose $\bar v$ to be any positive critical
point of the energy functional $E$ \alert{associated with $\Phi_c$:}
\begin{equation}\label{eqE}
  E[v] := \int_\Omega \left( \frac 1 2 \vert\nabla_y v \vert^2+
    V(v,y)\right)\, dy , 
  \qquad v \in H^1(\Omega), \quad v \bigl|_{\partial \Omega_\pm}
  = 0.
\end{equation}
To each such $\bar v$ one can associate a minimizer of $\Phi_c$ in the
admissible class of functions that are bounded above by $\bar
v$. Then, under the assumption that the initial data approaches $\bar
v$ uniformly from below as $z \to -\infty$ one can make the conclusion
(under generic non-degeneracy assumptions) that the solution of the
initial value problem converges exponentially to the corresponding
minimizer. Thus, every front-like initial data in a more restricted
sense of connecting zero to a critical point $\bar v$ of $E$ converges
to the minimizer associated with that critical point. More precisely,
we have

\begin{Corollary}
  \label{c:main}
  Under hypotheses (H1)--(H3), (N1)--(N2), \alert{with the trial
    function $u$ in hypothesis (H3) satisfying $u \leq \bar v$, where
    $\bar v > 0$ is a critical point of $E$,} let $\bar u$ be the
  unique (up to translations) non-trivial minimizer of $\Phi_\cd$ over
  functions $u \in H^1_\cd(\Sigma)$ satisfying \eqref{bc} and $0 \leq
  u \leq \bar v$.  Let $u_0 \in C^0(\overline\Sigma) \cap
  W^{1,\infty}(\Sigma) \cap L^2_\cd(\Sigma)$ satisfy $0 \le u_0 \le
  \bar v$ and $u_0(\cdot, z) \to \bar v$ uniformly in $\Omega$ as $z
  \to -\infty$. Then the conclusion of Theorem \ref{t:main} holds.
\end{Corollary}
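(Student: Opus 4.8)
The plan is to deduce the statement from Theorem \ref{t:main} by letting $\bar v$ play the role that the constant $1$ plays there. What makes this reduction work is that, since $\bar v>0$ is a critical point of the cross-sectional energy $E$, the function $(y,z)\mapsto\bar v(y)$, viewed as a $z$-independent element of $C^2(\Sigma)\cap C^1(\overline\Sigma)$ satisfying the boundary conditions \eqref{bc}, is a stationary solution of \eqref{pde}: indeed $\partial_t\bar v-\Delta\bar v-f(\bar v,y)=-(\Delta_y\bar v+f(\bar v,y))=0$, so \eqref{superpsi} holds here with equality and $\bar v$ is in particular a supersolution. Hence, by the parabolic comparison principle, the bound $0\le u_0\le\bar v$ is preserved by the flow, i.e.\ $0\le u(\cdot,t)\le\bar v$ in $\Sigma$ for every $t>0$, so that the whole evolution takes place in the order interval $[0,\bar v]$.

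With this a priori confinement in hand, I would re-run the construction of Section \ref{secvar} and the proof of Theorem \ref{t:main} with the modifications announced in the paragraph preceding this corollary: in the definition \eqref{eq:V} of $V$ the cutoff $\chi_{[0,1]}$ is replaced by the cutoff to $\{0\le s\le\bar v(y)\}$, the condition $f(1,y)\le0$ in (H1) is replaced by \eqref{superpsi} (an identity here), and the pointwise requirements in (H2) are imposed for $0\le u\le\bar v$ instead of $0\le u\le1$. Theorem \ref{t:min}, applied to this truncated problem with the trial function of (H3) bounded above by $\bar v$, then provides the speed $\cd$, the (unique up to translations) non-trivial minimizer $\bar u$ of $\Phi_\cd$ over functions in $H^1_\cd(\Sigma)$ satisfying \eqref{bc} with $0\le u\le\bar v$, and its bottom state $v=\lim_{z\to-\infty}\bar u(\cdot,z)$, a critical point of $E$ with $v\le\bar v$ pointwise (because $\bar u\le\bar v$). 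Since every solution under consideration stays in $[0,\bar v]$, every place where the constant $1$ was used in the proof of Theorem \ref{t:main} --- as an invariant upper barrier, and through $V$ and the structural and non-degeneracy hypotheses --- is now replaced consistently by $\bar v$, and the local stability analysis of Sec.\ \ref{secsta} together with the global convergence argument of Sec.\ \ref{sec:main} go through unchanged.

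It then remains only to check the hypothesis \eqref{inidat} of Theorem \ref{t:main}. Since $u_0(\cdot,z)\to\bar v$ uniformly in $\Omega$ as $z\to-\infty$ and $v\le\bar v$, we obtain $\liminf_{z\to-\infty}u_0(\cdot,z)=\bar v\ge v>v-\alpha$ uniformly in $\Omega$, for the $\alpha>0$ produced above. Applying (the adapted form of) Theorem \ref{t:main} then yields $R_\infty\in\mathbb R$ and $\sigma>0$ with $\|\TT_{R_\infty-\cd t}u(\cdot,t)-\bar u\|_{H^2_\cd(\Sigma)}\le Ce^{-\sigma t}$ for all $t\ge t_0$, which is exactly the conclusion of Theorem \ref{t:main}.

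The step that requires the most care --- though it introduces no new idea beyond Theorem \ref{t:main} --- is verifying that $\bar v$ really does behave like $1$ at every stage: that it is an admissible stationary supersolution in the right function space (which uses its smoothness and the identity $\Delta_y\bar v+f(\bar v,y)=0$), that the truncated potential $V$ still satisfies the structural assumptions and the non-degeneracy conditions (N1)--(N2) entering the stability analysis, and that the constrained minimizer $\bar u$ inherits the regularity, the exponential decay as $z\to+\infty$, and the spectral non-degeneracy needed in Sec.\ \ref{secsta}. These are precisely the properties one imposes through the modified hypotheses, so once they are granted no work beyond the proof of Theorem \ref{t:main} is needed.
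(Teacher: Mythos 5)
Your proposal is correct and follows essentially the same route as the paper, which proves the corollary precisely by the reduction you describe: replace the constant $1$ by the critical point $\bar v$ of $E$ (a stationary supersolution preserved by the flow), modify (H1), (H2) and the cutoff in the definition of $V$ accordingly, and then apply the proof of Theorem \ref{t:main} verbatim, noting that \eqref{inidat} holds since $u_0(\cdot,z)\to\bar v\ge v$ uniformly as $z\to-\infty$.
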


An important implication of Corollary \ref{c:main} is that $\bar v$
{\em selects} the attracting variational traveling wave solution in
the long time limit. This kind of conclusion was made by us earlier
for the propagation speed of the leading edge without the
non-degeneracy assumptions of the present paper \cite{mn:cms08}.

\alert{We note that} the problem of convergence to traveling waves for
solutions of \eqref{pde} has been widely considered in the
mathematical literature.  We refer to
\cite{fife79,volpert,roquejoffre97} and references therein, for a
general overview on the subject.  \alert{Specifically,} our result
should be compared with \cite[Theorem 3.7]{roquejoffre97} \alert{by
  Roquejoffre, where, in particular, convergence to variational
  traveling waves is proved (in our notation) for initial data that
  approach zero from above as $z \to +\infty$ and a non-degenerate
  local minimizer $\bar v > 0$ of $E$ from below as $z \to -\infty$.}

\alert{Roquejoffre makes a crucial} assumption that there
\alert{exists a variational traveling wave connecting $\bar v$ at $z =
  -\infty$ with zero at $z = +\infty$. In contrast, our results do not
  require existence of such a traveling wave. Instead, we require that
  the initial} \alert{data decay sufficiently rapidly to zero as $z
  \to +\infty$ and stay approximately above the local minimizer $v$ of
  $E$ corresponding to the limit at $z = -\infty$ for the special
  variational traveling wave $\bar u$ given by Theorem \ref{t:min} as
  $z \to -\infty$. Under this condition the solution of \eqref{pde} is
  attracted to a translate of $\bar u$ on compacts in the moving
  reference frame (see Theorem \ref{t:main} for a precise statement).
  We note that in the class of front-like initial data with
  sufficiently fast exponential decay considered by us global
  stability of a traveling wave connecting zero to $\bar v$ is a
  simple consequence of Corollary \ref{c:main}. Indeed, if there
  exists a variational traveling wave $u_c$ connecting zero to $\bar
  v$, then by Proposition \ref{p:max} we have $u_c = \bar u$, where
  $\bar u$ is as in Corollary \ref{c:main} (note that in this case
  hypotheses (H3) and (N2) are unnecessary). Thus, within the scope of
  \eqref{pde} and front-like initial data decaying sufficiently fast,
  our results are applicable to more general initial data than the
  ones considered in \cite{roquejoffre97} and, most importantly,
  provide a selection criterion for the limit front in terms of the
  asymptotic behavior of the initial data as $z \to -\infty$.}
We also point out that our assumptions concerning the nonlinearity $f$
(see (H1)--(H3) below) are quite general compared to the assumptions
usually made in the literature
\cite{berestycki92,vega93,roquejoffre97}. \alert{In particular, these
  assumptions can be readily verified in practice (for examples see
  \cite{lmn:cpam04,lmn:arma08,mn:cvar08}).}

\section{Preliminaries}\label{secvar}

Throughout this paper we assume $\Omega$ to be a bounded domain
(connected open set, not necessarily simply connected) with a boundary
of class $C^2$. We start by listing the assumptions on the
nonlinearity $f$ which we need in Theorem \ref{t:main}.  The function
$f: [0,1]\times\overline \Om \to \mathbb R$ satisfies:
\begin{eqnarray*} 
  {\bf (H1)} && \quad f(0,y) = 0 \qquad f(1, y) \leq
  0 \qquad \textrm{for all } y \in \Omega,
  \\
  {\bf (H2)} && \quad f\in C^{0,\gamma}([0,1]\times\overline\Om) 
  \qquad f_u = \frac{\partial f}{\partial u}\in
  C^{0,\gamma}([0,1]\times\overline \Om) \ \textrm{for some $\gamma \in (0,1)$}. 
\end{eqnarray*}

Hypotheses (H1) and (H2) are needed to guarantee, in particular,
existence and basic regularity properties of solutions of \eqref{pde}.
Indeed, from \cite[Proposition 5.1]{mn:cms08} and \cite[Chapter
7]{lunardi} we have the following

\begin{Proposition} 
  \label{p:ini} 
  Under assumptions (H1) and (H2), let $u_0 \in C^0(\overline\Sigma)
  \cap W^{1,\infty}(\Sigma)$. Let also $u_0$ satisfy the boundary
  conditions (\ref{bc}) and assume $u_0(x) \in [0, 1]$ for all $x \in
  \Sigma$. Then there exists a unique solution (using notation of
  \cite{evanspde})
  \[
  u \in C^2_1( \Sigma \times (0, \infty)) \cap C^0(\overline \Sigma
  \times [0, +\infty))
  \]
  of (\ref{pde}) with boundary conditions (\ref{bc}) and initial
  condition $u(\cdot, 0) = u_0$, which satisfies $0 \leq u \leq 1$ and
  $||\nabla u||_{C_b(\overline\Sigma \times (0, +\infty))} < \infty$.
  Moreover, letting $\Sigma_M:=\Omega\times [M,M+1]$ for all $M\in\R$,
  we have
  \begin{equation}\label{stimawp}
    \|u(\cdot, t)\|_{W^{2,p}(\Sigma_M)} \le C(t_0,p)
    \qquad {\rm \ for\ all\ } t \geq t_0>0,\ p>1. 
  \end{equation}
  Finally, if $u_0\in L^2_c(\Sigma)$ for some $c > 0$, we also have
  \begin{equation}\label{stimahc}
    u\in C^\alpha((0, +\infty); H^2_c) \cap
    C^{1,\alpha}((0,+\infty);L^2_c(\Sigma)) \qquad    
    {\rm for\ all\ }\alpha \in (0,1),
  \end{equation}
  and
 \begin{equation}\label{h3c}
   \alert{u_t\in C((0,+\infty);H^1_c(\Sigma)).}
  \end{equation}
\end{Proposition}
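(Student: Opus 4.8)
\emph{Proof plan.} The plan is to obtain existence, uniqueness, the bounds $0\le u\le1$, the gradient bound and the uniform estimate \eqref{stimawp} from \cite[Proposition~5.1]{mn:cms08}, and then to upgrade the regularity in the exponentially weighted spaces, \eqref{stimahc}--\eqref{h3c}, by recasting \eqref{pde} as an abstract semilinear parabolic equation with analytic generator and invoking the smoothing theory of \cite[Chapter~7]{lunardi}.

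For the first part I would extend $f$ to a bounded function on $\R\times\overline\Om$ that is Lipschitz in $u$ uniformly in $y$ (e.g. continued by constants in $u$ outside $[0,1]$), and solve \eqref{pde}--\eqref{bc} locally in time by standard parabolic theory (on $C_b(\overline\Sigma)$, or on $L^p$ of bounded subcylinders). By (H1) the constants $0$ and $1$ are an ordered sub- and a supersolution, so the comparison principle forces $0\le u\le1$; hence $u$ is global and in fact solves the original problem. Interior Schauder and $L^p$ estimates give $u\in C^2_1(\Sigma\times(0,\infty))\cap C^0(\overline\Sigma\times[0,\infty))$, and combining $u_0\in W^{1,\infty}(\Sigma)$ with the $C^2$ regularity of $\partial\Om$ yields the bound on $\nabla u$ up to $t=0$. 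Since the coefficients of \eqref{pde} do not depend on $z$, every interior estimate commutes with the translation $\TT_R$, which makes \eqref{stimawp} uniform in $M$.

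For the weighted regularity I would set $w:=e^{cz/2}u$, noting that $w(\cdot,t)\in H^k(\Sigma)$ if and only if $u(\cdot,t)\in H^k_c(\Sigma)$, $k=0,1,2$, with comparable norms, and that \eqref{pde}--\eqref{bc} transforms into $w_t=Aw+g(w)$ with $A:=\Delta_y+\partial_z^2-c\,\partial_z+\tfrac{c^2}{4}$, $g(w):=e^{cz/2}f(e^{-cz/2}w,y)$, and the same boundary conditions. The operator $A$ is uniformly elliptic with bounded coefficients on $\Sigma$, whose cross-section $\Om$ has $C^2$ boundary carrying the Dirichlet and the Neumann conditions on separate connected components, so $A$ generates an analytic semigroup on $L^2(\Sigma)$ with $D(A)\hookrightarrow H^2(\Sigma)$; and since $f(0,y)=0$ and the extended $f_u$ is bounded, a mean value estimate gives $|g(w_1)-g(w_2)|\le L|w_1-w_2|$ pointwise, so $g:L^2(\Sigma)\to L^2(\Sigma)$ is globally Lipschitz with $g(0)=0$. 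By \cite[Chapter~7]{lunardi} the equation $w_t=Aw+g(w)$ then has a global mild solution for any $L^2(\Sigma)$ datum, which is classical for $t>0$ and satisfies, for every $\alpha\in(0,1)$, $w\in C^\alpha_{\rm loc}((0,\infty);D(A))\cap C^{1,\alpha}_{\rm loc}((0,\infty);L^2(\Sigma))$. Taking datum $e^{cz/2}u_0\in L^2(\Sigma)$ and observing that $e^{-cz/2}w$ is then a classical solution of \eqref{pde}--\eqref{bc} with datum $u_0$, uniqueness of classical solutions identifies it with $u$; undoing the substitution yields \eqref{stimahc}, while \eqref{h3c} follows by applying linear parabolic smoothing in $L^2_c(\Sigma)$ to the (weak) equation $\partial_t(u_t)=\Delta u_t+f_u(u,y)\,u_t$ satisfied by $u_t$, started from an arbitrary $t_0>0$.

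The steps I expect to require the most care are the abstract ones: identifying $D(A)$ with a weighted $H^2$ space, which rests on elliptic regularity up to the boundary for the mixed problem --- and is where the $C^2$ regularity of $\partial\Om$ and the placement of the Dirichlet and Neumann conditions on distinct connected components genuinely enter --- and deriving \eqref{h3c}, which requires running the linear smoothing estimate with the bounded, merely measurable coefficient $f_u(u(\cdot),y)$ in the weighted space, e.g. via a weighted energy argument controlling the boundary terms produced by the weight $e^{cz}$. Neither is severe given \cite[Proposition~5.1]{mn:cms08} and \cite[Chapter~7]{lunardi}, but these are the points where the argument is not purely formal.
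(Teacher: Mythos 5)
Your proposal is correct and follows essentially the same route as the paper, which gives no separate proof but obtains the statement directly from \cite[Proposition 5.1]{mn:cms08} (existence, uniqueness, $0\le u\le 1$, the gradient bound and \eqref{stimawp}) together with the semilinear parabolic theory of \cite[Chapter 7]{lunardi} for the weighted regularity \eqref{stimahc}--\eqref{h3c}. Your conjugation $w=e^{cz/2}u$ is merely an equivalent reformulation of working, as the paper does in Section \ref{sec:main}, with the sectorial operator with domain $H^2_c(\Sigma)$ directly in $L^2_c(\Sigma)$.
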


We now turn to the assumption which is both necessary and sufficient
for the existence of the special variational traveling wave solution
considered in this paper  \cite{lmn:cpam04,mn:cms08}. 

\medskip

\begin{description}
\item[\hspace{4mm} (H3)] There exist $c > 0$, satisfying $c^2 + 4
  \nu_0 > 0$, where
  \begin{eqnarray}
    \label{eq:nu0}
    \nu_0 = \min_{\substack{\psi \in H^1(\Omega) \\ \psi|_{\partial
          \Omega_\pm} = 0}} \frac{\int_\Omega  ( |\nabla_y \psi|^2 -
      f_u(0, y) \psi^2 ) \, dy}{\int_\Omega  \psi^2 \, dy},
  \end{eqnarray}
  \hspace{2.5mm} and $u \in H^1_c(\Sigma)$, such that $\Phi_c[u]\leq 0$
  and $u \not \equiv 0$.
\end{description}

\medskip

\begin{Remark}\label{r:eneg}\rm 
  As was shown in \cite{mn:cms08}, in the case $\nu_0 \geq 0$ the
  hypothesis (H3) is equivalent to the condition
  \begin{eqnarray}
    \label{eq:6}
    \inf_{\substack{v \in H^1(\Omega) \\ v|_{\partial
          \Omega_\pm} = 0}} E[v] < 0.
  \end{eqnarray}
\end{Remark}

Under the above assumptions, we can state the existence result
concerning the variational traveling wave which is the minimizer of
$\Phi_c$ with a suitably fixed translation.
\begin{Theorem}
  \label{t:min}
  Under hypotheses (H1)--(H3), there exists a unique value of $c^\dag
  \geq c$, where $c$ is defined by hypothesis (H3), and a unique
  function $\bar u \in C^2(\Sigma) \cap C^1(\overline \Sigma)$, $\bar
  u \not\equiv 0$, such that $( c^\dag, \bar u)$ solve (\ref{trav})
  and (\ref{bc}), and $\bar u$ satisfies $||\bar u(\cdot,
  0)||_{L^\infty(\Omega)} = \tfrac12 \sup_{z \in \mathbb R} ||\bar
  u(\cdot, z)||_{L^\infty(\Omega)}$ and minimizes $\Phi_\cd$ in
  $H^1_\cd(\Sigma)$.  Moreover $\bar u \in H^2_\cd(\Sigma) \cap
  W^{1,\infty}(\Sigma)$, \alert{$\bar u_z \in H^2_\cd(\Sigma)$}, $\bar
  u_z < 0$ in $\Sigma$, and
  \begin{equation} 
  \label{eqbar}
  \lim_{z\to +\infty}\bar u(\cdot, z)= 0, \quad \lim_{z\to
    -\infty}\bar u(\cdot, z)= v \qquad {\rm\ in \ 
  } C^1(\overline\Omega),
  \end{equation}
  where $v: \Omega \to \mathbb R$ is a local minimizer of $E$ defined
  in \eqref{eqE}, with $E[v] < 0$.
\end{Theorem}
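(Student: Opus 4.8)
The plan is to reconstruct $(c^\dag,\bar u)$ by a variational argument of the type carried out in \cite{lmn:cpam04,mn:cms08}; below I sketch the main steps, highlighting the one genuinely new point, the regularity $\bar u_z\in H^2_{c^\dag}(\Sigma)$. First I would set, for each admissible $c$ (i.e. with $c^2+4\nu_0>0$), $m(c):=\inf_{H^1_c(\Sigma)}\Phi_c$. Since $\Phi_c[0]=0$ we have $m(c)\le 0$, and since $\Phi_c$ is translation-covariant, $\Phi_c[\TT_R u]=e^{cR}\Phi_c[u]$, any $u$ with $\Phi_c[u]<0$ gives $\Phi_c[\TT_R u]\to -\infty$ as $R\to+\infty$; thus $m(c)\in\{-\infty,0\}$. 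A dilation in $z$, $u(y,z)\mapsto u(y,(c'/c)z)$, shows that $m(c)=-\infty$ implies $m(c')=-\infty$ whenever $0<c'<c$, so $\{c:m(c)=-\infty\}$ is an interval $(0,c^\dag)$; hypothesis (H3) makes it nonempty with $c^\dag\ge c$, while the finiteness $c^\dag<\infty$ follows (as in \cite{lmn:cpam04}) by combining the spectral lower bound governed by $\nu_0$ with an energy estimate near $u=0$. The asserted $c^\dag$ is this threshold.

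To produce $\bar u$ at $c=c^\dag$ I would use the direct method. The obstruction is the loss of compactness from the translation invariance (minimizing sequences may slide to $z=-\infty$), so I would argue as in \cite{mn:cms08}: reduce first to $0\le u\le 1$ (by (H1) and \eqref{eq:V}, $V(\cdot,y)$ vanishes on $(-\infty,0]$ and is constant on $[1,\infty)$, so truncation does not increase $\Phi_c$); minimize $\Phi_{c^\dag}$ on bounded cylinders $\Omega\times(-L,L)$ with $u=0$ on $\{z=L\}$ and with the translation pinned by the normalization $\|u(\cdot,0)\|_{L^\infty(\Omega)}=\tfrac12\sup_z\|u(\cdot,z)\|_{L^\infty(\Omega)}$; and pass $L\to\infty$ using uniform $W^{2,p}$ bounds of the type in Proposition \ref{p:ini}. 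The limit $\bar u$ is a classical solution of \eqref{trav}--\eqref{bc} in $H^2_{c^\dag}(\Sigma)\cap W^{1,\infty}(\Sigma)$ with $\Phi_{c^\dag}[\bar u]=m(c^\dag)=0$, and it is nontrivial thanks to a uniform lower bound on the finite-cylinder minimizers furnished by the (H3) trial function just below threshold. The limit in \eqref{eqbar} at $z=+\infty$ follows from $\bar u\in L^2_{c^\dag}(\Sigma)$ together with uniform interior regularity, while at $z=-\infty$ the translates $\TT_{-z_k}\bar u$ subconverge locally in $C^1$ to a $z$-independent entire solution of the stationary problem, which must be a local minimizer $v$ of $E$ with $E[v]<0$ (using Remark \ref{r:eneg} and the construction).

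Next I would establish monotonicity, the new regularity, and uniqueness. Since $\Phi_{c^\dag}[\bar u]=0$, every translate $\TT_R\bar u$ also minimizes $\Phi_{c^\dag}$; because $\Phi_{c^\dag}[\min(\bar u,\TT_R\bar u)]+\Phi_{c^\dag}[\max(\bar u,\TT_R\bar u)]=\Phi_{c^\dag}[\bar u]+\Phi_{c^\dag}[\TT_R\bar u]=0$, both the pointwise minimum and maximum are minimizers, hence classical solutions, and comparing them with $\bar u$ through the strong maximum principle forces, for each $R$, either $\TT_R\bar u\le\bar u$ or $\TT_R\bar u\ge\bar u$ on all of $\Sigma$; a connectedness argument together with $\bar u\to 0$ at $z=+\infty$ rules out the second alternative for $R>0$, so $\bar u_z\le 0$. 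Differentiating \eqref{trav} in $z$, $w:=\bar u_z$ solves the linear equation $\Delta w+c^\dag w_z+f_u(\bar u,y)w=0$ with the boundary conditions of \eqref{bc}, whence $w<0$ in $\Sigma$ by the strong maximum principle. For the new claim, $w=\bar u_z\in L^2_{c^\dag}(\Sigma)$ since it is one of the first derivatives of $\bar u\in H^1_{c^\dag}(\Sigma)$, the coefficient $f_u(\bar u(\cdot),\cdot)$ is locally H\"older continuous by (H2) and $\bar u\in C^1(\overline\Sigma)$, and applying the elliptic estimates in the exponentially weighted spaces (the weighted version of the bounds underlying Proposition \ref{p:ini}) to the equation for $w$ gives $w\in H^2_{c^\dag}(\Sigma)$, i.e. $\bar u_z\in H^2_{c^\dag}(\Sigma)$. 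Finally, if $(c_1,u_1)$ and $(c_2,u_2)$ both satisfy the conclusions, translation covariance and the dichotomy $m(c)\in\{-\infty,0\}$ force $c_1=c_2=c^\dag$ and both $u_i$ to minimize $\Phi_{c^\dag}$; a sliding argument --- translate $u_2$ until it first touches a translate of $u_1$, then apply the strong maximum principle to the difference, which satisfies a linear equation --- yields $u_1=\TT_{R_0}u_2$, and the normalization at $z=0$ pins $R_0=0$.

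The hardest part is the existence step: translation invariance destroys compactness of minimizing sequences, which is why I pass through finite cylinders with a pinned normalization and uniform interior estimates; a close second is uniqueness up to translation and monotonicity, both resting on the sliding method and the strong maximum principle, together with the finiteness of $c^\dag$. All of these are carried out in detail in \cite{lmn:cpam04,mn:cms08}, so in the paper it should suffice to cite those works and supply the short weighted-$H^2$ bootstrap for $\bar u_z$ indicated above.
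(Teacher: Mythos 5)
Your proposal is correct and follows essentially the same route as the paper: the paper gives no new proof here, but simply cites \cite[Theorem 3.3]{mn:cms08} and \cite[Proposition 3.3(ii)]{lmn:arma08} (whose variational existence/uniqueness/monotonicity arguments you reconstruct), and handles the only new claim, $\bar u_z \in H^2_{c^\dag}(\Sigma)$, exactly as you do — by differentiating \eqref{trav} in $z$ and applying the weighted regularity argument to the linear equation for $\bar u_z$.
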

\alert{

\medskip

\noindent For the proof see \cite[Theorem 3.3]{mn:cms08} and
\cite[Proposition 3.3(ii)]{lmn:arma08} (the latter argument also
applies to $\bar u_z$ by differentiating \eqref{trav} in $z$).

\medskip

Let us point out that the minimizer of Theorem \ref{t:min}
is in some sense the ``maximal'' variational traveling wave
solution. More precisely, we have the following result:

\begin{Proposition}
  \label{p:max}
  Let hypotheses (H1)--(H3) be satisfied, and let $(c, u)$ solve
  \eqref{trav} and \eqref{bc}, with $c > 0$, $u \in H^1_c(\Sigma)$ and
  $0 < u < 1$. Then, if $v$, $c^\dag$, $\bar u$ are as in Theorem
  \ref{t:min}, and
  \begin{eqnarray}
    \label{eq:vp}
    \liminf_{z \to -\infty} u(\cdot, z) \geq v \qquad \qquad
    \text{uniformly in } \Omega,
  \end{eqnarray}
  we have $c = c^\dag$ and $u = \TT_R \bar u$, for some $R \in \mathbb
  R$. In particular, the inequality in \eqref{eq:vp} is, in fact,
  equality.
\end{Proposition}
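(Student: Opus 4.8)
\textbf{Proof plan for Proposition \ref{p:max}.}

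The plan is to show that any variational traveling wave $(c,u)$ whose behavior at $z=-\infty$ dominates $v$ must in fact coincide with the maximal one $(\cd,\bar u)$, by combining a sliding/comparison argument with the variational characterization of $\bar u$. First I would record the basic structure: since $(c,u)$ solves \eqref{trav}--\eqref{bc} with $0<u<1$, elliptic regularity gives $u\in C^2(\Sigma)\cap C^1(\overline\Sigma)$, and because $u\in H^1_c(\Sigma)$ one has the exponential decay $u(\cdot,z)\to 0$ as $z\to+\infty$; moreover the limits of $u$ as $z\to\pm\infty$ (along subsequences, then genuinely, by standard parabolic/elliptic trajectory arguments) are equilibria of the cross-sectional problem, i.e.\ critical points of $E$. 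By \eqref{eq:vp} the limit at $z=-\infty$ lies at or above $v$. I would then invoke hypothesis (H3)/Theorem \ref{t:min} together with $\Phi_c$-monotonicity along translates: the key inequality is that $\Phi_c[u]\le 0$ (indeed $<0$, since $u\not\equiv 0$ connects zero to a nontrivial equilibrium $w$ with $E[w]<0$), so $(c,u)$ is an admissible competitor witnessing hypothesis (H3) at this particular $c$.

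The crucial step is a comparison between $u$ and translates $\TT_R\bar u$. Because $\bar u_z<0$, the family $\{\TT_R\bar u\}_{R\in\mathbb R}$ is ordered and sweeps from the constant $0$ (as $R\to-\infty$) to the constant $v$ (as $R\to+\infty$) in $C^1_{loc}$. Using the asymptotics \eqref{eq:vp} for $u$ at $z=-\infty$ and the decay of $u$ at $z=+\infty$, I would show that for $R$ sufficiently negative $\TT_R\bar u\le u$ on all of $\Sigma$, and then slide $R$ upward. Define $R^*=\sup\{R:\ \TT_R\bar u\le u\}$. If $R^*=+\infty$ this forces $u\ge v$ everywhere, hence $u\equiv v$, contradicting $u\to 0$ at $z=+\infty$; so $R^*$ is finite. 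At $R=R^*$ one has $\TT_{R^*}\bar u\le u$ with equality somewhere (either an interior/boundary touching point, or touching ``at infinity''), and the strong maximum principle / Hopf lemma applied to the difference $w=u-\TT_{R^*}\bar u\ge 0$, which satisfies a linear elliptic inequality, forces $w\equiv 0$, i.e.\ $u=\TT_{R^*}\bar u$. This immediately yields $c=\cd$ and $u=\TT_R\bar u$ with $R=R^*$, and then \eqref{eq:vp} is an equality since $\TT_R\bar u(\cdot,z)\to v$.

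The main obstacle is handling the touching-at-infinity scenario: the ordering $\TT_{R^*}\bar u\le u$ may fail to produce an interior contact point, the defect escaping to $z=-\infty$ (where both sides approach $v$) or to $z=+\infty$ (where both approach $0$). Here I would use the non-degeneracy of the equilibria — $v$ is a non-degenerate local minimizer of $E$ and $0$ is stable for the cross-sectional operator by the sign condition $c^2+4\nu_0>0$ from (H3) — to get matching exponential rates for $u$ and $\bar u$ at each end, so that either a genuine interior contact occurs for $R$ slightly below $R^*$, or a standard translation/extraction argument (shifting so the putative contact point is fixed and passing to a limit equation) again produces an interior contact and the maximum principle applies. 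An alternative, possibly cleaner, route that avoids some of this is purely variational: show $\Phi_c[u]\le\Phi_c[\TT_R\bar u]$ for a suitable $R$ by the sliding bound plus monotonicity of $\Phi_c$ under the ordering, conclude $u$ is also a minimizer of $\Phi_c$ at speed $c$, invoke uniqueness of $c$ in Theorem \ref{t:min} to get $c=\cd$, and then uniqueness up to translation of the minimizer to get $u=\TT_R\bar u$; the touching-at-infinity difficulty then reappears only in justifying the energy comparison, but there it can be absorbed into truncation estimates using the exponential weight $e^{cz}$.
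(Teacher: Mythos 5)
There is a genuine gap, and it sits exactly at the heart of the proposition: your sliding argument treats $u$ and $\TT_R\bar u$ as if they solved the same elliptic equation, but they travel at speeds $c$ and $\cd$ respectively, so the difference $w=u-\TT_R\bar u$ satisfies $\Delta w + c\,w_z + f_u(\xi)\,w = (\cd-c)\,\TT_R\bar u_z$, with a forcing term that does not vanish unless $c=\cd$. Consequently the strong maximum principle at a touching point does not close the argument, and the statement that contact ``immediately yields $c=\cd$'' is circular: ruling out $c\neq\cd$ is precisely what must be proved first. The paper does this in two separate steps, neither of which appears in your plan: $c>\cd$ is excluded variationally, since by \cite[Proposition 3.5]{lmn:arma08} the pair $(c,u)$ is an admissible trial function in (H3), contradicting $\cd\ge c$ in Theorem \ref{t:min}; and $c<\cd$ is excluded by a genuinely parabolic comparison in a frame moving with an intermediate speed $c'\in(c,\cd)$, against a compactly supported non-trivial minimizer $\bar u_{c'}$ of $\Phi_{c'}$ that stays below $\max(0,v-\eps)$ --- the relative drift $(c-c')t$ then pushes the translate of $u$ to zero in $H^1_c(\Sigma)$, which is incompatible with the fixed barrier. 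Only after $c=\cd$ is known does one invoke minimality and uniqueness up to translation (\cite[Theorem 3.3(v)]{mn:cms08}) to get $u=\TT_R\bar u$; no sliding of $\bar u$ itself is needed.

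Two further difficulties in your sketch are not repairable with the stated hypotheses. First, the initial ordering $\TT_R\bar u\le u$ for very negative $R$ is not automatic: at $z=+\infty$ the decay rates of $u$ and $\bar u$ are tied to their (a priori different) speeds, and if $u$ decays faster than $\bar u$ no translate of $\bar u$ lies below $u$ globally; at $z=-\infty$, \eqref{eq:vp} only gives $\liminf u\ge v$ while $\TT_R\bar u\to v$ from below, so dominating $\TT_R\bar u$ there would require quantitative rates you do not have. (The paper sidesteps this by comparing with a barrier that stays a fixed distance $\eps$ below $v$, so the liminf condition suffices.) Second, your fix for the ``touching at infinity'' scenario invokes non-degeneracy of $v$ as a local minimizer of $E$, i.e.\ essentially (N1); but Proposition \ref{p:max} assumes only (H1)--(H3), where one merely knows $\tilde\nu_0\ge 0$. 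Minor additional point: the claim $\Phi_c[u]<0$ is off --- for waves in $H^1_c$ one gets $\Phi_c[u]\le 0$, and for the minimizer the value is exactly $0$; moreover $\Phi_c[u]\le 0$ is not by itself contradictory when $c<\cd$ (where $\Phi_c$ is not coercive), which is again why the $c<\cd$ case needs the separate comparison argument.
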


\begin{proof}
  First note that we cannot have $c > c^\dag$. Indeed, if this
  inequality were true, by \cite[Proposition 3.5]{lmn:arma08} the pair
  $(c, u)$ can be taken as a trial function in hypothesis (H3),
  contradicting the conclusion of Theorem \ref{t:min} that $c^\dag
  \geq c$. On the other hand, it is easy to see that $c < c^\dag$ is
  also impossible. Indeed, arguing as in the proof of
  \cite[Proposition 5.5]{mn:cms08}, for any $c' \in (c, c^\dag)$ there
  exists a non-trivial minimizer $\bar u_{c'}$ of $\Phi_{c'}$ in the
  class of functions in $H^1_{c'}(\Sigma)$ which stay below $v$ and
  vanish outside $\Sigma_R = \Omega_\sigma \times (-R, R)$, with
  $\Omega_\sigma = \{y \in \Omega : \mathrm{dist}(y, \partial
  \Omega_\pm) > \sigma\}$, where $R > 0$ is large enough and $\sigma >
  0$ is small enough.\footnote{This choice of $\Sigma_R$ also corrects
    a minor inaccuracy in the proof of \cite[Proposition
    5.5]{mn:cms08}.}  Furthermore, $\bar u_{c'}$ is a classical
  solution of \eqref{trav} with $c = c'$ in $\Sigma_R$ and $\bar
  u_{c'} \leq \max(0, v - \eps)$ for some $\eps > 0$. Therefore, by
  \eqref{eq:vp} the function $\TT_{R'} \bar u_{c'} < u$ in $\Sigma$
  for some $R' \in \mathbb R$ sufficiently large negative, and by
  parabolic comparison principle \cite{protter} we have $\bar u_{c'} <
  \TT_{(c - c') t - R'} u$ for all $t > 0$. However, the latter is
  impossible, since the right-hand side of this inequality converges
  to zero in $H^1_c(\Sigma)$. Thus $c = c^\dag$, hence \alert{$u$ is a
    minimizer by \cite[Proposition 3.5]{lmn:arma08}, and the result
    follows from \cite[Theorem 3.3(v)]{mn:cms08}.}
\end{proof}

We note that, in particular, the result in Proposition \ref{p:max}
allows to extend the statements about monotonicity and uniqueness of
traveling waves established in the classical work of Berestycki and
Nirenberg \cite{berestycki92} for \eqref{pde} and \eqref{bc} (see also
\cite{vega93,vega93jmma}), in the class of variational traveling
waves, under only an assumption that the traveling wave approaches a
limit from below as $z \to -\infty$, zero from above as $z \to
+\infty$, and is sandwiched between these two limits. Indeed, suppose
$(c, u)$ is such a traveling wave, with $u(\cdot, z) \to \bar v$ as $z
\to -\infty$, with $0 < \bar v \leq 1$. Then by the argument of
\cite[Proposition 6.6]{lmn:arma08} $v$ is a critical point of $E$, and
by \cite[Proposition 3.5]{mn:cms08} we have $c^2 + 4 \nu_0 > 0$. So by
Proposition \ref{p:max} this traveling wave is a non-trivial minimizer
of $\Phi_c$ in $H^1_c(\Sigma)$ over all positive functions bounded
above by $\bar v$, and the result follows from \cite[Theorem
3.3]{mn:cms08}. In particular, we do not require any non-degeneracy
assumptions for the limits of $u(\cdot, z)$ as $z \to \pm \infty$, as
is done \cite{berestycki92}. Thus, we have:

\begin{Corollary}
  Under hypotheses (H1) and (H2), let $c > 0$, and let $u \in
  H^1_c(\Sigma)$ be a solution of \eqref{trav} and \eqref{bc},
  satisfying $u(\cdot, z) \to \bar v$ uniformly in $\Omega$ as $z \to
  -\infty$, where $0 < \bar v \leq 1$ and $0 < u < \bar v$. Then $c^2
  + 4 \nu_0 > 0$, the value of $c$ is unique, $u_z < 0$, and $u$ is
  unique up to translations.
\end{Corollary}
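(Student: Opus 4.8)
The plan is to derive the corollary directly from Proposition~\ref{p:max} and the discussion immediately preceding it, by first checking that the hypotheses of Proposition~\ref{p:max} are met. So the first step is to show that, given a solution $(c,u)$ of \eqref{trav}--\eqref{bc} with $c>0$, $u \in H^1_c(\Sigma)$, $0<u<\bar v \le 1$, and $u(\cdot,z)\to\bar v$ uniformly as $z\to-\infty$, one automatically has $c^2+4\nu_0>0$ and that $\bar v$ is a critical point of $E$. For the latter I would invoke \cite[Proposition 6.6]{lmn:arma08}: since $u(\cdot,z)$ converges in a suitable sense to $\bar v$ as $z\to-\infty$ and $u$ solves the traveling wave equation, passing to the limit forces $\Delta_y \bar v + f(\bar v,y)=0$ with the boundary conditions of \eqref{bc}, i.e. $\bar v$ is a critical point of $E$. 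For the spectral inequality $c^2+4\nu_0>0$ I would cite \cite[Proposition 3.5]{mn:cms08}, which gives exactly this necessary condition for the existence of a variational traveling wave in $H^1_c(\Sigma)$ with the prescribed decay; equivalently, if $c^2+4\nu_0\le 0$ the exponential weight $e^{cz}$ is too light for $u$ to lie in $H^1_c(\Sigma)$ while decaying at $z=+\infty$.

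Once $c^2+4\nu_0>0$ and $\bar v$ is a critical point of $E$ are established, one still needs a nontrivial minimizer to invoke. Here I would note that (H3) (with trial function bounded above by $\bar v$) is satisfied: indeed the given traveling wave $(c,u)$ itself, by \cite[Proposition 3.5]{lmn:arma08}, satisfies $\Phi_c[u]\le 0$, so $u\not\equiv 0$ serves as the trial function, and $c^2+4\nu_0>0$ has just been checked. Therefore Theorem~\ref{t:min} (in the $\bar v$-constrained form discussed after Corollary~\ref{c:main}) applies and produces the special variational traveling wave $(c^\dag,\bar u)$ with limit $v$ (a local minimizer of $E$) as $z\to-\infty$. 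The discussion preceding the corollary shows that $v=\bar v$ in the relevant sense, so condition \eqref{eq:vp} holds — in fact with equality — and Proposition~\ref{p:max} yields $c=c^\dag$ and $u=\TT_R\bar u$ for some $R\in\mathbb R$.

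With $u=\TT_R\bar u$ in hand, all four claimed conclusions follow by reading them off Theorem~\ref{t:min}: the value $c=c^\dag$ is unique because $c^\dag$ is the unique value for which the minimizer of $\Phi_c$ exists; the strict monotonicity $u_z<0$ is the statement $\bar u_z<0$ in $\Sigma$, which is translation-invariant; and uniqueness up to translations is immediate since any two such solutions are both translates of the single function $\bar u$. The inequality $c^2+4\nu_0>0$ was already obtained in the first step. Thus the corollary is a packaging of Proposition~\ref{p:max} together with the automatic verification of its hypotheses.

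The main obstacle I anticipate is the first step — verifying that the limit $\bar v$ is a critical point of $E$ and that the spectral condition $c^2+4\nu_0>0$ holds — because this is where one must pass carefully to the limit $z\to-\infty$ in a weighted space where the weight $e^{cz}$ vanishes at $-\infty$, so the traveling wave equation must be analyzed on translated compact slices $\Sigma_M$ with $M\to-\infty$ using the uniform $W^{2,p}$ bounds of Proposition~\ref{p:ini}-type estimates rather than directly in $H^1_c(\Sigma)$. Once these two facts are secured, the rest is a mechanical application of results already proved. A secondary subtlety is making sure the ``$\bar v$-constrained'' versions of (H3) and Theorem~\ref{t:min} are genuinely the ones stated in \cite{mn:cms08}, but this is exactly the modification spelled out in the remarks following Corollary~\ref{c:main}.
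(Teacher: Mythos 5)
Your proposal is correct and follows essentially the same route as the paper: establish that $\bar v$ is a critical point of $E$ via the argument of \cite[Proposition 6.6]{lmn:arma08} and that $c^2+4\nu_0>0$ via \cite[Proposition 3.5]{mn:cms08}, note that $u$ itself serves as the trial function for the $\bar v$-constrained (H3), and then apply Proposition \ref{p:max} in the constrained setting together with Theorem \ref{t:min} (i.e. \cite[Theorem 3.3]{mn:cms08}) to read off uniqueness of $c$, monotonicity, and uniqueness up to translations. The only cosmetic slip is logical order: one does not need $v=\bar v$ beforehand, since \eqref{eq:vp} already holds because the constrained minimizer's limit $v$ satisfies $v\le\bar v$, and the identification $v=\bar v$ then comes out as a consequence of Proposition \ref{p:max}.
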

}

We now list two additional technical assumptions (see also
\cite{roquejoffre97}), which are generically satisfied and are needed
to prove global exponential stability of the minimizers of $\Phi_c$
for initial data bounded below by $v$ as $z \to -\infty$.

\medskip

\begin{description}
\item[\hspace{4mm} (N1)] For $v$ as in Theorem \ref{t:min} we have
\begin{eqnarray}
  \label{eq:nut}
  \tilde \nu_0 = \min_{\substack{\psi \in H^1(\Omega) \\
      \psi|_{\partial \Omega_\pm} = 0}} \frac{\int_\Omega  ( |\nabla_y
    \psi|^2 - f_u(v, y) \psi^2 ) \, dy}{\int_\Omega  \psi^2 \, dy} >0. 
\end{eqnarray}
\item[\hspace{4mm} (N2)] For $v < 1$ as in Theorem \ref{t:min} there
  is no solution $(c^\dag,\bar u)$ of \eqref{trav} and \eqref{bc},
  with $c^\dag$ as in Theorem \ref{t:min}, such that $v< \bar u< 1$ on
  $\Sigma$.
\end{description}

\medskip

\alert{Conditions (N1) and (N2) are generic in the sense that the set
  of nonlinearities $f$ such that (N1) or (N2) do not hold is a meager
  subset of all $f$'s obeying (H1)-(H3), in the natural topology (for
  similar notions related to perturbations of $\Omega$ see
  \cite{henry05}).  Indeed,} condition (N1) is generic, \alert{since}
by the results of \cite{mn:cms08} we have $\tilde \nu_0 \geq 0$, so
that (N1) only excludes the degenerate case of $\tilde \nu_0 =
0$. Similarly, condition (N2) excludes the non-generic possibility of
existence of a traveling front invading $v$ from above with the same
speed $c^\dag$ as the front invading zero by $v$. To see that the only
non-trivial alternative would be to have a front invading $v$ with
lower speed, consider the following variational problem. Given $c>0$
and $h\in H^1_c(\Sigma)$ satisfying \eqref{bc}, let
\begin{eqnarray}
  \label{Psic}
  \Psi^v_c[h] := \int_{\Sigma} e^{cz} \left( \frac{ |\nabla h|^2}{2} +
    V(v+h,y)-V(v, y)-V'(v,y)\,h \right) \, dx, 
\end{eqnarray}
where we used the notation $V'(s, y) := \partial V(s, y) / \partial
s$.  Notice that, if $\bar h$ is a critical point of
$\Psi_{c^\dag}^v$, then $\bar u=v+\bar h$ is a solution of
\eqref{trav} and \eqref{bc}. We set
\begin{eqnarray}
  \label{cdagv}
  c^\dag_v := \inf\Big\{ c> 0:\ \Psi^v_c[h] \geq 0 \textup{ for all
  } h \geq 0 \Big\}. 
\end{eqnarray}
Then the following result concerning $c^\dag_v$ holds.

\begin{Lemma}\label{lemN}
  The functional $\Psi^v_c$ is weakly sequentially lower
  semicontinuous and coercive in $H^1_c(\Sigma)$ for all $c>c^\dag_v$,
  and $c^\dag_v\le c^\dag$, where $\cd$ is as in Theorem \ref{t:min}.
  Moreover, under hypothesis (N2) we have $c^\dag_v< c^\dag$.
\end{Lemma}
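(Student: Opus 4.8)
The plan is to establish the three assertions of Lemma~\ref{lemN} in sequence, relying heavily on the structure already developed for $\Phi_c$ in \cite{mn:cms08}. First I would address weak lower semicontinuity and coercivity of $\Psi^v_c$ in $H^1_c(\Sigma)$ for $c>c^\dag_v$. The functional $\Psi^v_c$ has exactly the same leading-order structure as $\Phi_c$: a quadratic gradient term $\tfrac12\int_\Sigma e^{cz}|\nabla h|^2\,dx$ plus a zeroth-order term $\int_\Sigma e^{cz}\,W(v+h,y)\,dx$ where $W(s,y):=V(s,y)-V(v,y)-V'(v,y)(s-v)$ is the Taylor remainder of $V$ at $s=v$. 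Since $W(v,y)=0$ and $\partial_s W(v,y)=0$, and $\partial^2_s W = V''= -f_u$ near $s=v$, hypothesis (N1) gives $W(v+h,y)\ge \tfrac12(\tilde\nu_0-\delta)h^2$ for small $h$, while (H1)--(H2) control the behavior for larger $h$. The key point is that $c>c^\dag_v$ is precisely the condition under which the quadratic form $h\mapsto \int_\Sigma e^{cz}(\tfrac12|\nabla h|^2 + \tfrac12 V''(v,y)h^2)\,dx$ — which is the Hessian of $\Psi^v_c$ at $h=0$ — controls the $H^1_c$ norm from below; indeed, using the exponential shift $h=e^{-cz/2}w$ as in \cite{mn:cms08} one converts the weighted quadratic form into $\int_\Sigma(\tfrac12|\nabla w|^2 + (\tfrac{c^2}{8}+\tfrac12 V''(v,y))w^2)\,dx$, whose coercivity is governed by the sign of $\tfrac{c^2}{4}+\tilde\nu_0$ relative to the threshold implicit in the definition \eqref{cdagv}. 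I would thus follow the argument of \cite[Theorem 3.3]{mn:cms08} essentially verbatim, replacing $V$ by $W$ and $\nu_0$ by $\tilde\nu_0$, to obtain both weak sequential lower semicontinuity (from convexity of the gradient term plus the lower bound on $W$, handling the sign-indefinite part via the compact embedding into $L^2$ on bounded $z$-slices together with the exponential weight) and coercivity on $H^1_c(\Sigma)$ for $c>c^\dag_v$.

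Second, for the inequality $c^\dag_v\le c^\dag$, I would argue by contradiction or directly using the definition \eqref{cdagv}: it suffices to show that for $c>c^\dag$ one has $\Psi^v_c[h]\ge 0$ for all $h\ge 0$. The natural route is to relate $\Psi^v_c$ to $\Phi_c$ via the substitution $u=v+h$. Since $v$ is a critical point of $E$ (this is part of Theorem~\ref{t:min}, where $v$ is a local minimizer of $E$), one has the pointwise identity expressing $\Phi_c[v+h]$ (suitably interpreted, even though $v\notin H^1_c$) in terms of $\Psi^v_c[h]$ plus terms that vanish after integrating the $\Delta_y v + f(v,y)$ combination against $e^{cz}h$; the boundary conditions in \eqref{bc} make the boundary terms vanish. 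Concretely, for $h\ge 0$ with $v+h\le 1$ we expect $\Psi^v_c[h] = \Phi_c[v+h] - (\text{const in }z)\cdot\int e^{cz}\,dz$-type reductions, and since for $c>c^\dag$ the functional $\Phi_c$ has no non-trivial minimizer and is in fact bounded below by zero on the relevant class (again by \cite{mn:cms08}, as $c^\dag$ is the unique value admitting a zero-energy configuration), one deduces $\Psi^v_c[h]\ge 0$. A cleaner alternative, which I would probably prefer, is to note that a minimizer of $\Psi^v_c$ over $\{h\ge 0\}$, if it had negative energy, would (by the regularity and strong maximum principle arguments of \cite{mn:cms08,lmn:arma08}) produce a traveling wave $(c,v+\bar h)$ with $v<v+\bar h$, and then a shift-and-compare argument against $\bar u$ (exactly as in the proof of Proposition~\ref{p:max} just given) would force $c\ge c^\dag$, i.e. $\Psi^v_c[h]\ge 0$ whenever $c>c^\dag$, giving $c^\dag_v\le c^\dag$.

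Third, for the strict inequality $c^\dag_v<c^\dag$ under (N2), I would suppose for contradiction that $c^\dag_v=c^\dag$. By the coercivity and lower semicontinuity just established (valid for $c>c^\dag_v=c^\dag$, and extendable to the threshold by a limiting argument, or by directly minimizing at $c=c^\dag$ as in \cite{mn:cms08}), and by the definition of $c^\dag_v$ as an infimum, the functional $\Psi^v_{c^\dag}$ restricted to $\{h\ge 0\}$ must fail to be nonnegative for $c$ slightly below $c^\dag$ but be nonnegative at $c^\dag$; the standard continuation/bifurcation analysis (as in \cite[Prop.~5.5]{mn:cms08} and \cite[Prop.~3.5]{lmn:arma08}, adapted to $\Psi^v$) then yields a non-trivial critical point $\bar h\ge 0$, $\bar h\not\equiv 0$, of $\Psi^v_{c^\dag}$ with $\Psi^v_{c^\dag}[\bar h]=0$ (the minimum value is attained and equals zero at the threshold). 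By the remark after \eqref{Psic}, $\bar u:=v+\bar h$ is then a solution of \eqref{trav} and \eqref{bc} with speed $c^\dag$; the strong maximum principle and Hopf lemma (using $f_u$ bounded, (H2)) upgrade $\bar h\ge 0$, $\bar h\not\equiv 0$ to $\bar h>0$, hence $\bar u>v$, and the upper bound $\bar u<1$ follows from the construction (one minimizes over $h$ with $v+h\le 1$, and strict inequality again from the maximum principle using $f(1,y)\le 0$ in (H1)). This produces exactly the forbidden object in (N2), a contradiction.

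The main obstacle I anticipate is the third step: extracting a genuine non-trivial critical point at the threshold speed $c^\dag_v=c^\dag$ and controlling its limits as $z\to\pm\infty$ — in particular showing $\bar h\to 0$ as $z\to+\infty$ (so that $\bar u\to v$, not to some other configuration) and that $\bar h$ does not escape to spatial infinity during the minimization (concentration-compactness / translation invariance must be broken, presumably by the same truncation-to-$\Sigma_R$ device used in the proof of Proposition~\ref{p:max} above). The delicate interplay is that at $c=c^\dag$ the weighted quadratic form is only marginally coercive, so the minimizing sequence could lose mass at $z=-\infty$; one must use that $v+h\le 1$ together with the strict positivity $\tilde\nu_0>0$ from (N1) to pin the profile down. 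Everything else is a transcription of the corresponding statements for $\Phi_c$ from \cite{mn:cms08}, with $\nu_0$ replaced by $\tilde\nu_0$ and the potential $V$ replaced by its shifted version $W$.
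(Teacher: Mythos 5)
Your first and third steps track the paper: lower semicontinuity and coercivity are indeed obtained by transcribing the arguments of \cite[Proposition 5.5]{lmn:arma08} and \cite[Proposition 6.9]{lmn:arma08} with $V$ shifted at $v$, and the strict inequality under (N2) is proved, as you propose, by extracting a nontrivial minimizer $\bar h\ge 0$ of $\Psi^v_{\hat c}$ whose speed is pinned between $c^\dag$ and $c^\dag_v$, so that $v+\bar h$ solves \eqref{trav}, \eqref{bc} at speed $c^\dag$ and violates (N2). One caveat on coercivity: for $c>c^\dag_v$ it is a genuinely nonlinear statement that uses the defining property \eqref{cdagv} (nonnegativity of $\Psi^v_{c'}$ for speeds $c'$ between $c^\dag_v$ and $c$), not the sign of $\tfrac{c^2}{4}+\tilde\nu_0$, which under (N1) is positive for every $c>0$; your heuristic conflates these two thresholds, though deferring to the cited argument saves the step.

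The genuine gap is the middle assertion $c^\dag_v\le c^\dag$. Your preferred ``wave plus comparison'' route cannot deliver it: a negative-energy minimizer of $\Psi^v_c$ would give a wave $v+\bar h$ lying strictly above $v$, hence above $\bar u$ and above every sub-$v$ profile, so no ordering is ever violated by the parabolic comparison principle whatever the relative speeds are; the mechanism of Proposition~\ref{p:max} (a compactly supported minimizer below $v$ that outruns a slower solution forced to stay above it) has no analog here. Moreover the conclusion you draw is in the wrong logical direction: ``negative energy forces $c\ge c^\dag$'' would yield $\Psi^v_c\ge 0$ for $c<c^\dag$, which says nothing about the infimum in \eqref{cdagv} being at most $c^\dag$. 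Your fallback identity relating $\Psi^v_c[h]$ to $\Phi_c[v+h]$ is also ill-posed: since $E[v]<0$ and $v$ does not decay as $z\to+\infty$, one has $\Phi_c[v+h]=-\infty$ for the admissible $h$, and $\Phi_c$ is certainly not bounded below by zero on that class. What the paper actually does, and what is missing from your proposal, is a gluing argument at the specific speed $c^\dag$: assuming $\Psi^v_{c^\dag}[h]<0$ for some $h\ge 0$, one modifies $h$ to vanish for $z\ge z_0$, perturbs the minimizer $\bar u$ of Theorem~\ref{t:min} (which tends to $v$ as $z\to-\infty$) into $\tilde u$ equal to $v$ for $z\le z_0$ with $\Phi_{c^\dag}[\tilde u]\le -\Psi^v_{c^\dag}[h]/2$, and glues $v+h$ (for $z\le z_0$) to $\tilde u$ (for $z>z_0$); an integration by parts in $y$ together with the Euler--Lagrange equation satisfied by $v$ kills the cross terms and gives $\Phi_{c^\dag}[\hat u]=\Psi^v_{c^\dag}[h]+\Phi_{c^\dag}[\tilde u]<0$, contradicting $\min\Phi_{c^\dag}=0$. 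This is precisely where the zero-energy minimizer enters, and no comparison-principle argument substitutes for it.
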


\noindent In other words, under hypothesis (N2) it is only possible to
have such a system of stacked waves \cite{volpert} invading zero, that
the front connecting zero with $v$ moves faster than the front
invading $v$ from above.

\emph{Proof of Lemma \ref{lemN}.} First of all, reasoning as in
\cite[Proposition 5.5]{lmn:arma08} and using the fact that $\tilde
\nu_0 \geq 0$, where $\tilde \nu_0$ is defined in \eqref{eq:nut}
\cite[Theorem 3.3(iv)]{mn:cms08}, one can see that $\Psi^v_c$ is
weakly lower semicontinuous in $H^1_c(\Sigma)$ for all $c>0$, so the
results of \cite{mn:cms08} apply to $\Psi_c^v$. Moreover, reasoning as
in the proof of \cite[Proposition 6.9]{lmn:arma08}, we also get that
$\Psi^v_{c}$ is coercive in $H^1_c(\Sigma)$ for all $c>c^\dag_v$.

  Let us now prove that $c^\dag_v\le c^\dag$.  Assume by contradiction
  that there exists $w \geq v$, such that $\Psi_{c^\dag}^v[w-v]<0$.
  Slightly perturbing $w$, we can ensure that $w = v$ for $z \ge z_0$,
  with $z_0\in \R$ big enough.  Let $\bar u$ be the minimizer of
  $\Phi_{c^\dag}$ given by Theorem \ref{t:min}, and let $\eps\le
  -\Psi_{c^\dag}^v[w-v]/2$. Since $\bar u(\cdot,z)\to v$ in
  $H^1(\Omega)$ as $z \to -\infty$, up to a suitable translation we
  can perturb $\bar u$ into a function $\tilde u\in
  H^1_{c^\dag}(\Sigma)$ such that $\tilde u = v$ for $z \le z_0$ and
  $\Phi_{c^\dag}[\tilde u]\le \eps$.  Define $\hat u \in
  H^1_\cd(\Sigma)$ as
  \[
  \hat u(y,z):= \left\{
    \begin{array}{ll} w(y,z) & {\rm if}\ z\le z_0
      \\
      \tilde u(y,z) & {\rm if}\ z> z_0.
    \end{array}
  \right.
  \]
  Letting $h=w-v \in H^1_\cd(\Sigma)$ and satisfying \eqref{bc}, after
  an integration by parts and using the Euler-Lagrange equation for
  $E$ satisfied by $v$, we get
  \begin{eqnarray*}
    \Phi_{c^\dag}[{\hat u}] &=& 
    \int_{-\infty}^{z_0} \int_{\Om} e^{c^\dag z}
    \left( \frac{ |\nabla 
        (v+h)|^2}{2} + V(v+h,y) \right) \, dy \, dz
    \\
    &&+ \int_{z_0}^{+\infty} \int_{\Om} e^{c^\dag z} \left( \frac{
        |\nabla \tilde u|^2}{2} + V(\tilde u,y) \right) \, dy \, dz
    \\
    &=& \int_{-\infty}^{z_0} \int_{\Om} e^{c^\dag z} 
    \left( \frac{ |\nabla h|^2}{2} + V(v+h,y)-V(v,y)-V'(v,y)h \right)
    \, dy \, dz
    \\
    &&+ \int_{-\infty}^{z_0} \int_{\Om} e^{c^\dag z} 
    \left( \frac{ |\nabla_y v|^2}{2} +V(v,y)\right) \, dy \, dz 
    \\
    &&+ \int_{z_0}^{+\infty} \int_{\Om} e^{c^\dag z} \left( \frac{
        |\nabla \tilde u|^2}{2} + V(\tilde u,y) \right) \, dy \, dz 
    \\
    &=& \Psi_{c^\dag}^v[h] + \Phi_{c^\dag}[\tilde u] \le
    \frac{\Psi_{c^\dag}^v[h]}{2}<0\,
  \end{eqnarray*}
  which contradicts the minimizing property $\Phi_\cd[\bar u] = 0$ of
  $\bar u$ \cite[Proposition 3.2]{lmn:arma08}.

  To conclude the proof, it remains to prove that $c^\dag_v< c^\dag$
  under hypothesis (N2).  If $c^\dag_v= c^\dag$, then for every $c\in
  (0,c^\dag)$, there exists a function $h_c \not\equiv 0$, such that
  $\Psi_c^v[h_c] < 0$. Hence, the analog of hypothesis (H3) holds for
  $\Psi_c^v$, and, therefore, there exists a non-trivial minimizer
  $\bar h$ of $\Psi_{\hat c}^v$ for some $\hat c \geq \cd$. On the
  other hand, by the argument of \cite[Proposition 5.5]{mn:cms08}, we
  have $\hat c \leq c^\dag_v$. So $\hat c = \cd$, and since $\hat u =
  v + \bar h > v$ is a solution of \eqref{trav} and \eqref{bc} with $c
  = \hat c$, this violates assumption (N2).
  \qquad \endproof

Finally, we note that if either (N1) or (N2) are violated, one would
not expect exponential stability of $\bar u$ in the reference frame
moving with speed $c^\dag$ any more. Therefore, in some sense these
conditions are also necessary for the results obtained by us.

\section{Local stability in $L^2_\cd(\Sigma)$}\label{secsta}

In this section we prove stability of the variational traveling wave
$\bar u$ minimizing $\Phi_\cd$ in the reference frame moving with
speed $\cd$ up to perturbations which are small in the $L^2_\cd$-norm
and stay approximately above $v$ behind the front.

\begin{Theorem}\label{t:wnstab}
  Assume hypotheses (H1)--(H3) and (N1)--(N2) hold, and let $\bar u$
  and $\cd$ be as in Theorem \ref{t:min}. Then there exist $\alpha >
  0$ and $\sigma > 0$, such that for every $u_0$ as in Theorem
  \ref{t:main} and for every $\omega > 0$ there exists $\eps > 0$,
  such that if
  \begin{eqnarray}\label{eqe}
    || u_0 - \bar u||_{L^2_\cd(\Sigma)}\leq \eps,
  \end{eqnarray}
  the solution $u(x, t)$ of
  \begin{equation}\label{pdec}
    u_t = \Delta u + \cd u_z + f(u, y),
  \end{equation}
  with boundary conditions in \eqref{bc} and $u(x, 0) = u_0(x)$
  satisfies
  \begin{eqnarray}
    \label{wnstab}
    ||u(\cdot, t) - \TT_{R_\infty} \bar u||_{L^2_\cd(\Sigma)} \leq
    \omega e^{-\sigma t}, \qquad |R_\infty| \leq \omega,
  \end{eqnarray}
  for some $R_\infty \in \mathbb R$.
\end{Theorem}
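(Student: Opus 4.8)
\noindent\emph{Proof proposal.}\quad The plan is to use that \eqref{pdec} is the $L^2_\cd(\Sigma)$-gradient flow of $\Phi_\cd$. Indeed, since $V'(\cdot,y)=-f(\cdot,y)$ on $[0,1]$ and $0\le u\le 1$ along the flow,
\[
e^{\cd z}\bigl(\Delta u+\cd u_z+f(u,y)\bigr)=\nabla\cdot\bigl(e^{\cd z}\nabla u\bigr)-e^{\cd z}V'(u,y),
\]
so \eqref{pdec} reads $u_t=-\nabla_{L^2_\cd}\Phi_\cd[u(\cdot,t)]$, and for solutions with the regularity of Proposition \ref{p:ini} (see \eqref{stimahc}, \eqref{h3c}) one has the dissipation identity $\frac{d}{dt}\Phi_\cd[u(\cdot,t)]=-\|u_t(\cdot,t)\|_{L^2_\cd(\Sigma)}^2$. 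Since $\Phi_\cd[\TT_R\bar u]=e^{\cd R}\Phi_\cd[\bar u]=0$ for every $R\in\mathbb R$, the orbit $\mathcal M:=\{\TT_R\bar u:R\in\mathbb R\}$ is a smooth curve of minimizers of $\Phi_\cd$ at level $0$, with tangent field $\TT_R\bar u_z\in H^2_\cd(\Sigma)$ (Theorem \ref{t:min}). For $u$ in a small $H^1_\cd$-neighborhood of $\mathcal M$ the implicit function theorem yields a unique $R=R(u)$ with $u-\TT_R\bar u\perp\TT_R\bar u_z$ in $L^2_\cd$; write $u(\cdot,t)=\TT_{R(t)}\bar u+w(\cdot,t)$. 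The goal is to prove that, once in this neighborhood, $u$ never leaves it, $R(t)\to R_\infty$, and $w(\cdot,t)\to 0$ exponentially.

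The analytic core is a spectral gap for the Hessian $D^2\Phi_\cd[\bar u]$, that is, for $Q(w):=\int_\Sigma e^{\cd z}\bigl(|\nabla w|^2-f_u(\bar u,y)w^2\bigr)\,dx$ (the linear term in the expansion of $\Phi_\cd$ at $\bar u$ drops by \eqref{trav}). Differentiating \eqref{trav} in $z$ gives $\mathcal L(\bar u_z)=0$ for the self-adjoint operator $\mathcal L w=-\Delta w-\cd w_z-f_u(\bar u,y)w$ on $L^2_\cd(\Sigma)$; since $\mathcal L$ is positivity-improving and $-\bar u_z>0$, $0$ is its simple principal eigenvalue, so $Q\ge 0$ with kernel exactly $\mathbb R\bar u_z$. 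Conjugating by $e^{\cd z/2}$ turns $\mathcal L$ into $-\Delta+\tfrac14\cd^2-f_u(\bar u,y)$ on $L^2(\Sigma)$, whose essential spectrum lies above $\tfrac14\cd^2+\min(\nu_0,\tilde\nu_0)$; this is positive because $\cd^2+4\nu_0\ge c^2+4\nu_0>0$ by (H3) (with $\cd\ge c$) while $\tilde\nu_0\ge 0$ by \cite{mn:cms08} and $\cd>0$. Hence $Q(w)\ge 2\sigma'\|w\|_{L^2_\cd}^2$ for $w\perp\bar u_z$ and some $2\sigma'>0$, which, combined with a small fraction of $Q(w)\ge\|\nabla w\|_{L^2_\cd}^2-C\|w\|_{L^2_\cd}^2$, upgrades to $Q(w)\ge\mu\|w\|_{H^1_\cd}^2$. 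Expanding $\Phi_\cd$ at $\TT_R\bar u$ and absorbing the $(1+\gamma)$-order remainder of $\nabla\Phi_\cd$ — controlled by (H2) and the $W^{1,\infty}$- and $W^{2,p}$-bounds of Proposition \ref{p:ini} — one then obtains, in a small enough neighborhood of $\mathcal M$,
\[
\Phi_\cd[u]\ \ge\ \tfrac{\mu}{4}\,\|w\|_{H^1_\cd}^2,\qquad
\|u_t\|_{L^2_\cd}^2=\bigl\|\nabla_{L^2_\cd}\Phi_\cd[u]\bigr\|_{L^2_\cd}^2\ \ge\ 4\sigma\,\Phi_\cd[u]
\]
for some $\sigma\in(0,\sigma')$; the second inequality (of \L ojasiewicz--Simon type, exponent $\tfrac12$) comes from $\|\mathcal L w\|_{L^2_\cd}^2\ge 2\sigma'\langle\mathcal L w,w\rangle_{L^2_\cd}$ on $\{w\perp\bar u_z\}$ plus remainder absorption.

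The main obstacle is keeping $u(\cdot,t)$ in this neighborhood for all $t$. First, parabolic smoothing (Proposition \ref{p:ini}, \eqref{stimahc}) converts the $L^2_\cd$-smallness \eqref{eqe} into $H^2_\cd$-smallness of $u(\cdot,t_0)-\bar u$ at a fixed small $t_0>0$, so $\Phi_\cd[u(\cdot,t_0)]\le C\eps^2$ and $u(\cdot,t_0)$ lies in the neighborhood. The delicate point is that the weight $e^{\cd z}$ is insensitive to the region far behind the front, so small $L^2_\cd$-data could in principle carry structure near $z=-\infty$ — a ``bump above $v$'' — that is advected toward $z=+\infty$ in the moving frame and pushes $u$ off $\mathcal M$. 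This is excluded using the front-like bound \eqref{inidat} together with (N1) and (N2): by the comparison principle $u(\cdot,t)$ is trapped below a translate of $\bar u$ perturbed into a supersolution and above a subsolution that stays $\ge v-\alpha$ as $z\to-\infty$; here (N1) makes $v$ a non-degenerate, linearly stable transverse rest state of \eqref{pdec} near $z=-\infty$, while (N2), through Lemma \ref{lemN}, gives $c^\dag_v<\cd$, so any front-like structure joining a state above $v$ to $v$ moves strictly slower than $\cd$ and thus recedes to $z=-\infty$ in the frame of \eqref{pdec}, where the weight suppresses it — equivalently, the coercivity of $\Psi^v_\cd$ from Lemma \ref{lemN} controls the behind-the-front part of the perturbation via a localized weighted-energy estimate. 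Combining this with the bounds of Proposition \ref{p:ini}, a continuation/bootstrap argument closes: if $\sup_{[t_0,T]}\|w(\cdot,t)\|_{H^1_\cd}\le\delta$ with $\delta$ small enough for the estimates above, the energy inequality forces $\|w(\cdot,t)\|_{H^1_\cd}\le C\eps\,e^{-\sigma(t-t_0)}\ll\delta$ on $[t_0,T]$, so the neighborhood is never left and $T=+\infty$.

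Granting the trapping, the two inequalities of the previous paragraph yield $\frac{d}{dt}\Phi_\cd[u]\le-4\sigma\Phi_\cd[u]$, hence $\Phi_\cd[u(\cdot,t)]\le C\eps^2 e^{-4\sigma t}$ and $\|w(\cdot,t)\|_{H^1_\cd}\le C\eps\,e^{-2\sigma t}$; since also $\Phi_\cd[u(\cdot,t)]\to 0$, integrating the dissipation identity gives $\int_t^\infty\|u_t\|_{L^2_\cd}^2\,ds=\Phi_\cd[u(\cdot,t)]$, and Cauchy--Schwarz over unit time-intervals yields $\int_t^\infty\|u_t(\cdot,s)\|_{L^2_\cd}\,ds\le C\eps\,e^{-\sigma t}$. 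Therefore $u(\cdot,t)$ is Cauchy in $L^2_\cd(\Sigma)$ and converges, at rate $e^{-\sigma t}$, to some $u_\infty$. Differentiating the orthogonality condition gives the modulation bound $|\dot R(t)|\le C\|u_t(\cdot,t)\|_{L^2_\cd}$ (with $R(t)$ bounded along the trajectory by the continuation argument), so $R(t)\to R_\infty$ with $|R(t)-R_\infty|\le C\eps\,e^{-\sigma t}$ and $|R_\infty|\le|R(t_0)|+C\eps\le C\eps$; consequently $u_\infty=\lim_t(\TT_{R(t)}\bar u+w(\cdot,t))=\TT_{R_\infty}\bar u$. Adding $\|w(\cdot,t)\|_{L^2_\cd}$ and $\|\TT_{R(t)}\bar u-\TT_{R_\infty}\bar u\|_{L^2_\cd}\le C|R(t)-R_\infty|$ gives $\|u(\cdot,t)-\TT_{R_\infty}\bar u\|_{L^2_\cd}\le C\eps\,e^{-\sigma t}$. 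Given $\omega>0$, it then suffices to take $\eps$ sufficiently small, with $\alpha$ and $\sigma$ depending only on $f$, $\Omega$ and the boundary conditions, to obtain \eqref{wnstab}. The only non-soft step is the global-in-time trapping of the third paragraph: the energy method by itself is local, and it is the comparison arguments powered by (N1), (N2) and Lemma \ref{lemN} that prevent loss of control behind the front.
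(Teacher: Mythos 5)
Your overall architecture (gradient flow, spectral gap at $\bar u$, modulation by the translate minimizing the weighted distance, a receding ``bump'' behind the front powered by $c^\dag_v<\cd$) matches the mechanisms of the paper, and your linear spectral-gap argument is an acceptable alternative to Lemma \ref{lemlin}. But there is a genuine gap at the analytic core: the two inequalities you build everything on, $\Phi_\cd[u]\ge\tfrac{\mu}{4}\|w\|_{H^1_\cd}^2$ and $\|\nabla_{L^2_\cd}\Phi_\cd[u]\|_{L^2_\cd}^2\ge 4\sigma\,\Phi_\cd[u]$, are not established in the regime the theorem actually covers. Passing from the quadratic form $Q$ to the nonlinear functional requires absorbing remainders of the type $\int_\Sigma e^{\cd z}\bigl|f_u(\bar u,y)-f_u(\tilde u,y)\bigr|\,w^2\,dx$, and by (H2) this is only $O(\|w\|_{L^\infty}^\gamma\,\|w\|_{L^2_\cd}^2)$: smallness of $\|w\|_{H^1_\cd}$ (or $H^2_\cd$, via smoothing) does \emph{not} make $\|w\|_{L^\infty}$ small, because the exponential weight is blind to the region $z\to-\infty$, where the hypotheses of the theorem explicitly allow $u$ to differ from $\bar u$ by $O(1)$. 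The $W^{1,\infty}$ and $W^{2,p}$ bounds of Proposition \ref{p:ini} give boundedness, not smallness, so the constant in front of $\|w\|_{L^2_\cd}^2$ in the remainder need not be beaten by the gap $K$. The paper itself flags this obstruction: $\Phi_c$ is not a priori $C^2$ on $H^1_c(\Sigma)$, and the strict-minimality remark after Proposition \ref{proh} is stated only under closeness in $L^2_\cd\cap L^\infty$, which is precisely what is unavailable behind the front. Consequently your bootstrap (``if $\sup_{[t_0,T]}\|w\|_{H^1_\cd}\le\delta$ then the energy inequality gives $\|w\|\ll\delta$'') does not close: the hypothesis of the bootstrap is not strong enough to validate the energy inequality you invoke inside it.

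What fills this hole in the paper is not a static coercivity plus \L ojasiewicz estimate but a \emph{dynamic} input: Proposition \ref{p:w2close} shows, via an explicit subsolution (a perturbed minimizer $v_\delta^-$ of $E$, using (N1)) and an explicit traveling supersolution $v_\delta^+ + \bar h(\cdot,z-z_0+bt)$ built from a minimizer of $\Psi^{v_\delta^+}_c$ with $c\in(c^\dag_v,\cd)$ (using (N2) through Lemma \ref{lemN}), that the rightmost point $z_\delta(t)$ where $\|u(\cdot,z,t)-\bar u(\cdot,z-R(t))\|_{L^\infty(\Omega)}>\delta$ recedes linearly, $z_\delta(t)\le z_0+a-bt$ with $b=\cd-c>0$. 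The bad region then enters the evolution of $m(t)=\|w(\cdot,t)\|_{L^2_\cd}^2$ not as an absorbed remainder but as a time-decaying forcing: $\tfrac{dm}{dt}\le -K m + C e^{\cd(z_0-bt)}$, which is integrated directly; the modulation parameter is controlled by the explicit formula for $dR/dt$ and the whole scheme is closed by a continuation argument in which the hypothesis \eqref{equesto} of Proposition \ref{p:w2close} is re-validated from the decay of $m$ and of $|R|$. Your phrase that ``the coercivity of $\Psi^v_\cd$ controls the behind-the-front part via a localized weighted-energy estimate'' gestures at this but is not an argument; to repair the proposal you would need either to prove such a localized estimate (effectively re-deriving Proposition \ref{p:w2close}) and then insert it as a forcing term in your differential inequality for $\Phi_\cd[u(\cdot,t)]$ or for $\|w\|_{L^2_\cd}^2$, or to restrict the claimed nonlinear coercivity to the region $z\ge z_\delta(t)$ and track the complementary region separately, which is exactly the paper's route.
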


We note that our approach differs somewhat from the conventional
approach to the studies of front stability
\cite{sattinger76,berestycki92arma,roquejoffre92,roquejoffre94} in the
way we treat translations along the cylinder axis. We track the front
position by minimizing the $L^2_\cd$-distance between the solution of
\eqref{pdec} and a translate of $\bar u$. As a consequence, the
deviation between the solution and the closest translate of $\bar u$
is automatically orthogonal to the null-space of the linearization
operator, allowing to readily establish the exponential decay of the
$L^2_\cd$-distance. Thus, our method is more variational in
nature. Let us also point out that, in contrast to the usual approach,
our initial data do not need to be close to $\bar u$ in $L^\infty$ in
the whole cylinder, they may be significantly larger than $\bar u$ at
large negative $z$.

Throughout the rest of this section, hypotheses (H1)--(H3) and
(N1)--(N2) are assumed to hold, and $\cd$, $\bar u$, $v$ always refer
to the minimizer in Theorem \ref{t:min}. We begin with the following
basic lemma concerning the linearization around $\bar u$.

\begin{Lemma}\label{lemlin}
  There exists $K > 0$, such that
  \begin{eqnarray}
    \label{l2}
    \int_\Sigma e^{\cd z} \left( |\nabla w|^2 - f_u(\bar u, y) w^2
    \right) dx \geq K \int_\Sigma e^{\cd z} w^2 dx,
  \end{eqnarray}
  for all $w \in H^1_\cd(\Sigma)$ satisfying $\int_\Sigma e^{\cd z} w
  \bar u_z dx = 0$.
  \end{Lemma}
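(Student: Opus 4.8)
The plan is to establish the spectral gap estimate \eqref{l2} by exploiting the variational characterization of $\bar u$ as a minimizer of $\Phi_\cd$ in $H^1_\cd(\Sigma)$, together with the non-degeneracy hypotheses (N1) and (N2). Write the quadratic form on the left-hand side of \eqref{l2} as $Q[w] := \int_\Sigma e^{\cd z}(|\nabla w|^2 - f_u(\bar u, y) w^2)\, dx$; this is precisely the second variation of $\Phi_\cd$ at $\bar u$. Since $\bar u$ minimizes $\Phi_\cd$, we immediately have $Q[w] \ge 0$ for all $w \in H^1_\cd(\Sigma)$. Differentiating the traveling wave equation \eqref{trav} in $z$ shows that $\bar u_z$ is in the kernel of the associated linearized operator, i.e. it is (formally) a zero eigenfunction; and by Theorem \ref{t:min} we know $\bar u_z \in H^2_\cd(\Sigma)$ with $\bar u_z < 0$ in $\Sigma$, so $\bar u_z$ is an admissible, sign-definite element of the relevant space. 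The sign-definiteness is the key point: a ground state of a Schrödinger-type operator that does not change sign must correspond to the bottom of the spectrum, so the restriction of $Q$ to the $L^2_\cd$-orthogonal complement of $\bar u_z$ should be coercive.

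The key steps, in order, would be: (i) rephrase $Q$ as a quadratic form associated with a self-adjoint operator $L$ on $L^2_\cd(\Sigma)$ obtained by conjugating away the weight $e^{\cd z}$ — i.e. set $w = e^{-\cd z/2} \tilde w$, turning $L$ into a Schrödinger operator $-\Delta + \tfrac{\cd^2}{4} - f_u(\bar u, y) + (\text{lower order})$ acting on $L^2(\Sigma)$, with $\bar u_z$ transformed accordingly into a positive ground state; (ii) show that the bottom of the essential spectrum of $L$ is strictly positive, using the limits of $\bar u$ as $z \to \pm\infty$ from \eqref{eqbar}: as $z \to +\infty$ the potential approaches the constant determined by $f_u(0,y)$, and $\cd^2 + 4\nu_0 > 0$ (which holds since $\cd \ge c$ and hypothesis (H3)) gives a positive bottom there; as $z \to -\infty$ the potential approaches the one determined by $f_u(v, y)$, and hypothesis (N1), $\tilde\nu_0 > 0$, gives strict positivity there; (iii) conclude that below the essential spectrum $L$ has at most finitely many eigenvalues, and since $0$ is an eigenvalue with a positive eigenfunction $\bar u_z$, it is the lowest one and is simple; (iv) therefore $L$ restricted to $\{\bar u_z\}^\perp$ has spectrum bounded below by the next eigenvalue or the essential spectrum, in either case by a strictly positive constant $K$, which is exactly \eqref{l2}.

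The main obstacle I expect is step (ii)–(iii): rigorously justifying the essential spectrum computation and the finiteness of the discrete spectrum for an operator with $z$-dependent coefficients on an unbounded cylinder, and then ruling out the possibility that $0$ is a \emph{non-simple} eigenvalue or that there is another eigenvalue at $0$ that is not captured by $\bar u_z$. This is where hypothesis (N2) must enter: if the spectral gap failed, one could construct (via the corresponding eigenfunction, or by a continuation/minimization argument over the functionals $\Psi^v_c$ from Lemma \ref{lemN} and the relation $c^\dag_v < c^\dag$) a second independent traveling-wave-type solution or a front invading $v$ from above at speed $\cd$, contradicting (N2). Making this contradiction precise — translating a hypothetical zero mode orthogonal to $\bar u_z$ into a genuine solution of \eqref{trav}, and controlling its behavior as $z \to -\infty$ so that it yields a solution with $v < \bar u < 1$ — is the delicate part; I would handle it by a careful Sturm-Liouville-type argument on the cross-sections combined with the strict inequality $c^\dag_v < c^\dag$ furnished by Lemma \ref{lemN} under (N2), which is precisely why that lemma was proved beforehand.
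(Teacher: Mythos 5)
Your core strategy (conjugating away the weight via $w = e^{-\cd z/2}\tilde w$, identifying the quadratic form with a Schr\"odinger operator $-\Delta + \tfrac{\cd^2}{4} - f_u(\bar u,y)$ on $L^2(\Sigma)$, locating the essential spectrum from the limits $\bar u \to 0$ and $\bar u \to v$ using (H3) and (N1), and then invoking ground-state theory for the sign-definite zero mode $\bar u_z$) is a legitimate and essentially equivalent route, but it differs in packaging from the paper. The paper never passes to an operator or to essential-spectrum machinery: it proves the end estimates \eqref{K1}--\eqref{K2} directly from (N1) and from (H3) together with \eqref{eqbar}, then runs a concentration-compactness argument on a constrained minimizing sequence for the quadratic form, using $\bar u_z<0$ and the strong maximum principle to show that the minimum value is $0$ and is attained only on multiples of $\bar u_z$, and finally gets the gap by a contradiction argument with an orthogonal minimizing sequence. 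One payoff of the paper's route is that it sidesteps a point you gloss over: the claim that $Q[w]\ge 0$ follows ``immediately'' from minimality of $\bar u$ is not justified, since $\Phi_\cd$ is not \emph{a priori} twice continuously differentiable on $H^1_\cd(\Sigma)$ (this is exactly the issue flagged in the remark after the lemma); fortunately your scheme does not actually need that claim, because non-negativity falls out of the ground-state identification.

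The genuine gap is in your last paragraph. Hypothesis (N2) is \emph{not} needed for this lemma, and the detour you propose through Lemma \ref{lemN} and $c^\dag_v < c^\dag$ (building a front invading $v$ from a hypothetical second zero mode) would not work as stated: a zero mode orthogonal to $\bar u_z$ must change sign and decays in the weighted space, so there is no mechanism to promote it to a solution of \eqref{trav} with $v < \bar u < 1$. The step you call delicate is closed by exactly the standard argument you already sketched in step (iii): since $0$ lies strictly below the essential spectrum and has the sign-definite eigenfunction $\bar u_z$, the bottom of the spectrum is $0$ and is simple --- any other zero mode would have to be $L^2_\cd$-orthogonal to $\bar u_z$, hence sign-changing, and then (as in the paper) $|w'|$ is also a minimizer of the Rayleigh quotient, both satisfy the linearized equation classically, and the strong maximum principle forces $|w'| \equiv 0$, a contradiction. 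With simplicity in hand, the min-max principle on $\{\bar u_z\}^\perp$ gives $K = \min(\lambda_2, \inf\sigma_{\mathrm{ess}}) > 0$, and no finiteness of the discrete spectrum and no appeal to (N2) or Lemma \ref{lemN} is required; (N2) enters only later, in the construction of the supersolution for Proposition \ref{p:w2close}.
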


\begin{proof}
  First of all, observe that by choosing $R_1$ and $R_2$ sufficiently
  large, we have
  \begin{eqnarray}
    {\int_{-\infty}^{-R_1} \int_\Omega e^{\cd z} \left( |\nabla w|^2 -
        f_u(\bar u, y) w^2 \right) dy \, dz \over  \int_{-\infty}^{-R_1}
      \int_\Omega e^{\cd z} w^2 \, dy \, dz} \geq K_1 > 0, \label{K1} \\ 
    {\int_{R_2}^{+\infty} \int_\Omega e^{\cd z} \left( |\nabla w|^2 -
        f_u(\bar u, y) w^2 \right) dy \, dz \over  \int_{R_2}^{+\infty}
      \int_\Omega e^{\cd z} w^2 \, dy \, dz} \geq K_2 > 0,\label{K2}
  \end{eqnarray}
  for all $w \in H^1_\cd(\Sigma)$. Indeed, if $z$ is large enough
  negative, then $\bar u(\cdot, z)$ is sufficiently close in
  $L^\infty(\Omega)$ to $v$. Hence, (\ref{K1}) holds in view of
  (\ref{eq:nut}). On the other hand, by the estimate of \cite[Lemma
  5.1]{lmn:arma08}, we have
  \begin{eqnarray*}
    &&\int_{R_2}^{+\infty} \int_\Omega e^{\cd z} \left( |\nabla w|^2 -
      f_u(0, y) w^2 \right) dy \, dz 
    \\ 
    &&\geq \int_{R_2}^{+\infty} \int_\Omega e^{\cd z} \left(
      {\cd^2 \over 4}  w^2 + |\nabla_y w|^2 + f_u(0, y) w^2 \right) dy
    \, dz 
    \\ 
    &&\geq \left( {\cd^2 \over 4} + \nu_0 \right)
    \int_{R_2}^{+\infty}  \int_\Omega e^{\cd z} w^2 \, dy \, dz.
  \end{eqnarray*}
  So, by hypothesis (H3) and \eqref{eqbar}, the inequality in
  (\ref{K2}) holds for some $K_2 > 0$ and $R_2$ large enough.

  Let us now show that the inequality in \eqref{l2} holds with $K = 0$
  for all $w \in H^1_\cd(\Sigma)$ and that equality holds if and only
  if $w$ is a multiple of $\bar u_z$ (the proof essentially follows
  the ideas of concentration compactness principle in the case of
  exponentially weighted Sobolev spaces \cite{lions84a,struwe} and
  relies on the maximum principle). Indeed, denote by $H[w]$ the
  left-hand side of \eqref{l2} and let $(w_n)$ be a minimizing
  sequence for $H$ subject to the constraint
  $||w_n||_{L^2_\cd(\Sigma)} = 1$. By coercivity of $H$ on the
  constraint, ensured by hypothesis (H2), we have $w_n \rightharpoonup
  w_0$ in $H^1_\cd(\Sigma)$. In fact, $w_0 \not = 0$, since otherwise
  $w_n \to 0$ in $L^2_\mathrm{loc}(\Sigma)$, and so
  $\int_{-\infty}^{-R_1} \int_\Omega e^{\cd z} w_n^2 dx +
  \int_{R_2}^{+\infty} \int_\Omega e^{\cd z} w_n^2 dx \geq 1 - \eps$
  for any $\eps > 0$ and large enough $n$. Therefore, by \eqref{K1}
  and \eqref{K2} we would have $H[w_n] \geq K > 0$. However, this
  contradicts the fact (first pointed out in \cite{barenblatt57}) that
  $\bar u_z$ is an eigenfunction associated with zero eigenvalue of
  the linearization of \eqref{trav} around $\bar u$ (related to the
  translational symmetry in the $z$-direction
  \cite{barenblatt57,sattinger76,berestycki92arma,roquejoffre92}),
  which can be seen by differentiating \eqref{trav} with respect to
  $z$ and noting that $\bar u_z \in \alert{H^2_\cd(\Sigma)}$ by
  Theorem \ref{t:min}.

  In view of lower semicontinuity of $H$ with respect to the weak
  convergence in $H^1_\cd(\Sigma)$, which follows from
  \cite[Proposition 5.5]{lmn:arma08}, hypothesis (H3) and Theorem
  \ref{t:min}, we have $\displaystyle H[w_0] \leq \liminf_{n \to
    \infty} H[w_n] \leq ||\bar u_z||_{L^2_\cd(\Sigma)}^{-2} H[\bar
  u_z] = 0$. Then, since $w_0 \not=0$, the function $\bar w =
  ||w_0||_{L^2_\cd(\Sigma)}^{-1} |w_0| \geq 0$ is a minimizer of the
  considered constrained minimization problem. In fact, $H[\bar w] =
  0$, since otherwise $\bar w$ must be orthogonal to $\bar u_z$, which
  is impossible due to the fact that $\bar u_z < 0$ by Theorem
  \ref{t:min}. So $H[w] \geq 0$ for all $w \in
  H^1_\cd(\Sigma)$. Moreover, $H[w] = 0$ implies that $w$ is a
  multiple of $\bar u_z$ (compare also with
  \cite{berestycki92arma,roquejoffre92,roquejoffre97}). If not, there
  exists a minimizer $w'$ which is orthogonal to $\bar u_z$ in
  $L^2_\cd(\Sigma)$ and, therefore, changes sign. But $|w'|$ is also a
  minimizer, hence both $w'$ and $|w'|$ satisfy the linearized version
  of \eqref{trav} in the classical sense, thanks to hypothesis (H2)
  and Theorem \ref{t:min}. So by strong maximum principle $|w'| = 0$,
  leading to a contradiction.

  To complete the proof of the lemma, suppose, to the contrary of its
  statement, there exists a sequence $(w_n)$ with the properties that
  $||w_n||_{L^2_\cd(\Sigma)} = 1$, $\int_\Sigma e^{\cd z} \bar u_z w_n
  \, dx = 0$ and $H[w_n] \to 0$ as $n \to \infty$. Hence $w_n$ is a
  minimizing sequence and converges to a non-trivial multiple of $\bar
  u_z$ weakly in $H^1_\cd(\Sigma)$. But this contradicts the
  orthogonality of $w_n$ to $\bar u_z$, which is preserved in the
  limit as $n \to \infty$.  
\end{proof}

\alert{Let us note that one may naturally think that the result of
  Lemma \ref{lemlin} may be used to show that the minimizer $\bar u$
  is, in fact, a strict minimizer of $\Phi_\cd$ on a suitable subset
  of $H^1_\cd(\Sigma)$. This, however, proves difficult, since the
  functional $\Phi_c[u]$ is not {\em a priori} twice continuously
  differentiable in $H^1_c(\Sigma)$. We will get back to this question
  after Proposition \ref{proh} below.}

Our next result shows that, if a solution to \eqref{pdec} with initial
datum satisfying \eqref{inidat} with $\alpha$ sufficiently small is
close enough to a suitable translate of $\bar u$ in $L^2_\cd(\Sigma)$,
then it is also close in $L^\infty$ on some growing portion of
$\Sigma$, provided that $\eps$ is small enough. More precisely, let
$R: [0, \infty) \to \mathbb R$. For a given $\delta > 0$, we define
$z_\delta: [0, \infty) \to \overline{\mathbb R}$ as
\begin{equation}\label{zeta}
  z_\delta(t) := \sup\big\{ z\in\R:\, \| u(\cdot, z, t) - \bar
  u(\cdot, z - R(t))\|_{L^\infty(\Omega)} > \delta\big\} \qquad
  \forall t\ge 0.
\end{equation}
Then, the following result holds true.
\begin{Proposition}\label{p:w2close}
  There exists $b>0$, such that for every $\delta>0$ sufficiently
  small there exist $\alpha = \alpha(\delta) > 0$, $a = a(\delta) >
  0$, $\bar z_0 = \bar z_0(\delta, u_0) \in \mathbb R$ and $\eta =
  \eta(\delta, u_0) > 0$, such that for every $z_0 \leq \bar z_0$
  there exists $\eps = \eps(\delta, z_0) > 0$ such that for all $T >
  0$
  \begin{equation}
    \label{w0infty}
    z_\delta(t)\le z_0 + a - bt \qquad \forall t\in [0,T],
  \end{equation}
  whenever
  \begin{equation}\label{equesto}
    |R(t)| \leq \delta ~~\mathrm{and}~~ \| u(\cdot, t) -
    \TT_{R(t)} \bar u \|_{L^2_\cd(\Sigma)} 
    \leq \eta \qquad \forall t\in [0,T],
  \end{equation} 
  where $u_0$, $u$, $\alpha$, $\eps$ are as in Theorem \ref{t:wnstab}.
\end{Proposition}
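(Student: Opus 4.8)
The plan is to prove the pointwise closeness $z_\delta(t) \le z_0 + a - bt$ by constructing explicit super- and sub-solutions of \eqref{pdec} that pinch $u$ around $\TT_{R(t)}\bar u$ on the region $\{z \ge z_0 + a - bt\}$, and propagate this pinching in time via the parabolic comparison principle. The starting point is that by \eqref{eqbar} and the exponential decay of $\bar u$ and $\bar u_z$ coming from Theorem \ref{t:min}, the linearization of \eqref{pdec} around $\bar u$ has, far to the right, a spectral gap controlled by $\cd^2/4 + \nu_0 > 0$ (hypothesis (H3)), while far to the left it has a gap controlled by $\tilde\nu_0 > 0$ (hypothesis (N1)). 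The idea is therefore to build a supersolution of the form $\TT_{R(t)}\bar u + \zeta(y,z,t)$ and a subsolution $\TT_{R(t)}\bar u - \zeta(y,z,t)$, where $\zeta$ is a product of the principal eigenfunction of the cross-sectional operator (near $0$, resp. near $v$) times an exponential profile in $z$ travelling with a small speed, glued together. The decay constant $b$ is then the speed at which this ``collar'' moves, which must be chosen strictly positive but small; this is where the $\delta$-dependence of $a$ and $\alpha$ enters, since the glued barrier is only an honest super/subsolution where the nonlinearity is well-approximated by its linearization, i.e.\ on $\{z \ge \text{const}\}$ where $u$ is $O(\delta)$-close to $\bar u$ (from the right) or on the complementary region where we only need the lower bound $u \ge v - \alpha$ from \eqref{inidat} together with (N1).

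The key steps, in order. First, fix $\delta$ small and choose the cross-sectional eigenfunctions $\psi_0$ (for $-f_u(0,\cdot)$, eigenvalue $\nu_0$) and $\tilde\psi$ (for $-f_u(v,\cdot)$, eigenvalue $\tilde\nu_0$), both positive in $\Omega$, and set up two one-dimensional barrier profiles: $\phi_+(z,t) = A e^{-\mu_+(z - z_0 - a + bt)}$ with $\mu_+$ slightly less than $\cd/2 + \sqrt{\cd^2/4 + \nu_0}$ so that the linear operator $\partial_t - \partial_{zz} - \cd\partial_z - f_u(0,\cdot)$ applied to $\psi_0 \phi_+$ has a favorable sign, and a matching left-side profile built from $\tilde\psi$. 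Second, absorb the quadratic error: on the region $\{z \ge z_0 + a - bt\}$ we have $|u - \TT_{R(t)}\bar u| \le \delta$ once we know $z_\delta \le z_0 + a - bt$, so $f(u,y) - f(\TT_{R(t)}\bar u,y) = f_u(\TT_{R(t)}\bar u,y)(u - \TT_{R(t)}\bar u) + O(\delta|u-\TT_{R(t)}\bar u|)$, and the $O(\delta)$ term is dominated by the spectral gap provided $\delta$ and $b$ are small. Third, handle the moving translation: since $|R(t)| \le \delta$ and $|\dot R(t)|$ is controlled — here we need an a priori bound on $\dot R$, which should come from differentiating the defining minimization relation for $R$ (as in Theorem \ref{t:wnstab}) and using \eqref{equesto}; alternatively one avoids $\dot R$ entirely by comparing against $\TT_{\pm\delta}\bar u$ directly and absorbing the $2\delta$-translation mismatch into the barrier amplitude $A$, which is cleaner. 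Fourth, fix the barrier amplitude $A = A(\delta, z_0, u_0)$ and the threshold $\eta$ so that at $t = 0$ the $L^2_\cd$-smallness \eqref{equesto} and the $L^\infty$ bound from Proposition \ref{p:ini} force $\zeta(\cdot,\cdot,0) \ge |u_0 - \TT_{R(0)}\bar u|$ on $\{z \ge z_0 + a\}$ — this is where $\bar z_0(\delta, u_0)$ is chosen so that $u_0$ is already $\delta$-close to $\bar u$ on the relevant half-line, and $\eps = \eps(\delta, z_0)$ is chosen last so that the $L^2_\cd$-ball of radius $\eps$ sits inside the set where the barrier dominates. Fifth, run the comparison principle on $\{z \ge z_0 + a - bt\} \times [0,T]$: the barrier dominates on the parabolic boundary (at $t=0$ by step four; on the moving boundary $z = z_0 + a - bt$ because there $|u - \TT_{R(t)}\bar u|$ is controlled by $\delta$ and the barrier amplitude was chosen $\ge \delta$), hence $|u - \TT_{R(t)}\bar u| \le \zeta$ throughout, which forces $z_\delta(t) \le z_0 + a - bt$ and closes the bootstrap.

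The main obstacle, I expect, is the gluing of the two barriers across the region where $\bar u$ transitions between $0$ and $v$: there the cross-sectional operator has no uniform sign for its potential $f_u(\bar u(\cdot,z),\cdot)$, so neither $\psi_0\phi_+$ nor $\tilde\psi\phi_-$ is a supersolution by itself. The fix is that this transition region is a \emph{fixed compact} $z$-interval (independent of $t$), so on it one does not propagate smallness but merely needs the barrier to stay above the $O(1)$ quantity $|u - \TT_{R(t)}\bar u|$; since the barrier amplitude grows like $e^{\mu_+ bt}$ as the collar $\{z = z_0 + a - bt\}$ moves left past this compact set, and since we only care about the \emph{right} tail for the conclusion \eqref{w0infty}, one can simply take the barrier to be a large constant times $e^{\cd z/2}$-type weight on this fixed window and verify it is a supersolution there using coercivity of $H$ (Lemma \ref{lemlin}) — but this requires the window to have been passed, i.e.\ it constrains how $a$ depends on $\delta$ and on the profile of $\bar u$. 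A secondary technical point is that $z_\delta$ as defined in \eqref{zeta} is only lower-semicontinuous in $t$, so the bootstrap ``$z_\delta(t) \le z_0 + a - bt \Rightarrow$ barrier valid $\Rightarrow z_\delta(t) \le z_0 + a - bt$'' must be run via a continuity/connectedness argument on $\{t : z_\delta(t) \le z_0 + a - bt\}$ rather than naively; this is standard but must be stated carefully.
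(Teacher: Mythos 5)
Your overall strategy---pinching $u$ against $\TT_{R(t)}\bar u \pm \zeta$ with linearized, eigenfunction-based collars traveling at a small speed $b$---misses the mechanism that actually makes the proposition true, and this shows up as a circular step. The real content of \eqref{w0infty} is that the \emph{order-one} deviation region behind the front (where $u_0$ may be anything between $v-2\alpha$ and $1$, so $u-\TT_{R(t)}\bar u$ is not small) recedes at a strictly positive speed in the frame moving with $\cd$. That is a nonlinear front-speed statement about fronts invading $v$ from above, and it holds precisely because hypothesis (N2), through Lemma \ref{lemN}, gives $c^\dag_v<\cd$: the paper's supersolution is $u^+(y,z,t)=v_\delta^+(y)+\bar h(y,z-z_0+bt)$, where $\bar h$ is a minimizer of $\Psi_c^{v_\delta^+}$ with $c\in(c^\dag_v,\cd)$, i.e.\ a genuine front connecting $1$ to (slightly above) $v$ that travels at speed $c<\cd$ and therefore recedes at $b=\cd-c$ in the moving frame. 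Your proposal never invokes (N2) or Lemma \ref{lemN}, and your $b$ is supposed to come from the spectral gaps $\nu_0$, $\tilde\nu_0$; but the retreat speed of an $O(1)$ hump above $v$ is not governed by the linearization around $\bar u$, and since the statement is expected to fail when (N2) fails, no argument that omits it can be complete. Concretely, your comparison on the shrinking domain $\{z\ge z_0+a-bt\}$ needs the barrier to dominate on the moving lateral boundary $z=z_0+a-bt$, and your justification there (``$|u-\TT_{R(t)}\bar u|$ is controlled by $\delta$'') is exactly the conclusion being proved. The only a priori input, \eqref{equesto} plus the uniform Lipschitz bound, gives via interpolation an $L^\infty$ bound of order $\bigl(\eta^2 e^{-\cd(z_0+a-bt)}\bigr)^{1/(n+2)}$, which grows like $e^{\cd bt/(n+2)}$ along that boundary, so it supplies no control for large $t$; the bootstrap/connectedness repair you mention cannot fix this, because the failure is not one of continuity but of the boundary estimate itself.

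Two further points. First, the paper avoids any moving boundary by working with barriers defined on all of $\Sigma$ (or on $\Omega\times(-\infty,\bar z_0]$): a \emph{stationary} subsolution $u^-(y,z,t)=v_\delta^-(y)$ slightly below $v$, and the traveling supersolution above; the $\delta$-closeness to $\TT_{R(t)}\bar u$ on $[z_0+a-bt,\bar z_0]$ then follows because both $u$ and $\TT_{R(t)}\bar u$ are squeezed into the strip $[v-\delta,\,v+\tfrac12\delta]$ there, while the region $z\ge\bar z_0$ is handled, as in your step four, purely by the $L^2_\cd$-to-$L^\infty$ interpolation. Second, your barriers built from eigenfunctions vanish on $\partial\Omega_\pm$ with slope $O(\delta)$, so the required initial ordering $|u_0-\TT_{R(0)}\bar u|\le\zeta$ can fail in a boundary layer near the Dirichlet boundary, since no smallness of the normal derivative of $u_0-\bar u$ is assumed; this is exactly why the paper constructs $v_\delta^-$ through the convexified energy so that it vanishes on a full $\sigma$-neighborhood of $\partial\Omega_\pm$, and takes $v_\delta^+=\sigma>0$ on that boundary. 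Finally, your fallback in the gluing region---verifying a supersolution property ``using coercivity of $H$ (Lemma \ref{lemlin})''---is not sound: that lemma is an integrated quadratic-form inequality and yields no pointwise differential inequality for a barrier.
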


\begin{proof}
  By (\ref{equesto}) and the uniform Lipschitz continuity of $u(\cdot,
  t)$ in $\Sigma$, reasoning as in the proof of \cite[Proposition
  3.3(iii)]{lmn:arma08} we have the following $L^\infty$-estimate:
  \begin{eqnarray}
    \label{linfcr}
    || u(\cdot, t) - \TT_{R(t)} \bar u ||^{n+2}_{C_b(\overline \Omega 
      \times [z_0, +\infty))}  \leq C \eta^2 e^{-\cd z_0},
  \end{eqnarray}
  for any $z_0 \in \mathbb R$, any $t \in [0, T]$ and some $C > 0$
  depending on $||\nabla u||_{C_b(\overline\Sigma \times (0,
    +\infty))}$ (see Proposition \ref{p:ini}).  On the other hand, by
  Theorem \ref{t:min} for any $\alpha > 0$ there exists $\bar z_0 \in
  \mathbb R$, such that
  \begin{equation}\label{zv}
    || \bar u(\cdot, z-R(t))-v||_{C^0(\overline \Omega)} \leq
    \alpha \qquad \forall z \leq \bar z_0, ~ \forall t \in [0, T].
  \end{equation}
  Recalling \eqref{inidat} and possibly reducing $\bar z_0$,
  we can also assume that 
  \begin{equation}
    \label{uzv}
    u_0(\cdot, z)\ge v - 2 \alpha \qquad \forall z\le
    \bar z_0. 
  \end{equation}
  Now, choosing $\eta > 0$ sufficiently small, the right-hand side of
  (\ref{linfcr}) can be bounded by $\alpha^{n+2}$ at $z_0 = \bar z_0$,
  so we have
  \begin{eqnarray}
    \label{z0p}
    ||u(\cdot, t) - \TT_{R(t)} \bar u||_{C_b(\overline \Omega \times
      [\bar z_0, +\infty))} \leq \alpha \qquad \forall t \in [0, T]. 
  \end{eqnarray}
  Therefore,
  \begin{equation}
    \label{linf0}
    ||u(\cdot, z, t) - \bar u(\cdot, z -
    R(t))||_{C^0(\overline\Omega)} \leq \delta  \qquad \forall z
    \geq \bar z_0, ~\forall t \in [0,T],
  \end{equation}
  as long as $\alpha \leq \delta$, so that $z_\delta(t) \leq \bar z_0$
  for all $t \in [0, T]$.

  It remains to show that the inequality in \eqref{linf0} also holds
  for $z\in [z_0+a-bt, \bar z_0]$, for some positive $a$ and $b$, for
  small enough $\alpha$ and $\eps$.  We proceed by constructing
  explicit upper and lower barriers for (\ref{pdec}) in $\Omega \times
  (-\infty ,\bar z_0] \times [0, T]$.
  
  \medskip
  
  \noindent{\it Subsolution.}
  First, consider the case of $\partial\Omega_\pm = \varnothing$,
  i.e., pure Neumann boundary conditions in \eqref{bc}. Then, it is
  straightforward to verify that by hypotheses (H1)--(H2) and (N1) the
  function $v_\delta^- = v - C \delta \tilde \psi_0$, where
  $\tilde\psi_0 > 0$ is an eigenfunction associated with $\tilde\nu_0$
  in \eqref{eq:nut} and $\displaystyle C =
  ||\tilde\psi_0||_{C^0(\overline\Omega)}^{-1}$, is the desired
  subsolution, provided that $\delta$ is sufficiently small.

  The construction is more delicate in the presence of Dirichlet
  boundary conditions, since we do not wish to put any restrictions on
  the derivative of the initial data near the Dirichlet portion of the
  boundary. So, let us now assume that $\partial \Omega_\pm \not=
  \varnothing$, implying, in particular, that $v < 1$ in
  $\overline\Omega$. We construct a subsolution in the form of a
  non-negative local minimizer of $E$ that lies sufficiently close to
  and below $v$, and vanishes identically within some small distance
  to $\partial \Omega_\pm$.

  We proceed in the usual way by introducing the modified energy
  $\tilde E$, given by \eqref{eqE} in which $V$ is replaced by $\tilde
  V(u, y) = - \int_0^u \tilde f(s, y) ds$, where $\tilde f$ is
  obtained from $f$ by the odd extension for $u < 0$ and the $C^1$
  linear extrapolation for $|u - v| > \delta$, for some fixed $0 <
  \delta \ll 1$ and each $y \in \Omega$. We note that $\tilde f(u, y)
  = f(u, y)$ whenever $|u - v| \leq \delta$ and $u \geq 0$. Now, by
  hypotheses (H2) and (N1) the energy $\tilde E$ is strictly convex
  for all functions vanishing on $\partial \Omega_\pm$ and, hence,
  admits a unique minimizer $v_\delta^- \in H^1(\Omega)$ in the class
  of functions vanishing outside $\Omega_\sigma = \{ y \in \Omega:
  \mathrm{dist}(y, \partial \Omega_\pm) > \sigma\}$, with $\sigma > 0$
  sufficiently small. Moreover, we have $|v_\delta^- - v| = O(\sigma)$
  in $\Omega_\sigma$. Indeed, testing $\tilde E$ with $\tilde v =
  \max(0, v - C \sigma)$ for $C > 0$ so large that $\tilde v \equiv 0$
  in $\Omega \backslash \Omega_\sigma$ and using coercivity of $\tilde
  E$ and the fact that $v$ satisfies the Euler-Lagrange equation for
  $\tilde E$ in the whole of $\Omega$, we obtain that $||v_\delta^- -
  v||_{L^2(\Omega)} = O(\sigma)$. Therefore, by elliptic regularity
  theory \cite{gilbarg} \alert{and possibly reducing $\sigma$, we have
    $||v_\delta^- - v||_{L^\infty(\Omega)} = O(\sigma)\le \delta$,}
  and so $v_\delta^-$ satisfies the Euler-Lagrange equation for the
  original energy $E$ whenever $v_\delta^- > 0$.

  In fact, $v_\delta^- \geq 0$ in $\Omega$ and is strictly positive in
  $\Omega_\sigma$. Indeed, by its definition the function $\tilde V(u,
  \cdot)$ is even, whenever $|u - v| \leq \delta$. Hence, if
  $v_\delta^-$ is a minimizer satisfying the latter inequality, so is
  $|v_\delta^-|$. But by uniqueness the two must be equal. On the
  other hand, this implies that $v_\delta^-$ is a critical point of
  the original energy $E$. Therefore, by strong maximum principle we
  have $v_\delta^- > 0$ in $\Omega_\sigma$. Similarly, we must have
  $v_\delta^- < v$ in $\Omega$, since $\bar v = v + a \tilde \psi_0$
  is a strict supersolution for any $0 < a \ll 1$ and, therefore,
  cannot touch $v_\delta^-$ from above. Thus, we constructed a
  function $v_\delta^-$ which is a non-negative subsolution of the
  Euler-Lagrange equation for $E$, and $0 \leq v_\delta^- \leq v$. In
  particular, by construction
  \begin{equation}
    \label{eq:vdel}
    v - \delta \leq v_\delta^- \leq \max( 0, v - 2 \alpha),
  \end{equation}
  in $\Omega$, for $\alpha$ sufficiently small, depending only on
  $\delta$. Finally, extending this function to $\Sigma \times [0, T]$
  by defining $u^-(y, z, t) := v_\delta^-(y)$, we obtain a subsolution
  on the desired domain.
 
  \medskip
  
  \noindent{\it Supersolution.} Let $\sigma > 0$ be sufficiently
  small. By the same type of argument as in the construction of
  $v_\delta^-$ above, there exists a local minimizer $v_\delta^+$ of
  $E$, such that $v_\delta^+(y) = \sigma$ for all $y
  \in \partial\Omega_\pm$, and we have $v + \beta \leq v_\delta^+ \leq
  v + \tfrac14 \delta$, for some $\beta > 0$.

  Now, let $c \in (c^\dag_v, c^\dag)$, and consider
  $\Psi^{v_\delta^+}_c$ defined in \eqref{Psic} with $v_\delta^+$ in
  place of $v$.  Then, by an extension of the argument of Lemma
  \ref{lemN} it is not difficult to see that there exists a minimizer
  $\bar h$ of $\Psi^{v_\delta^+}_c$ in the set
  \[
  \begin{aligned}
    X := \Big\{ h\in H^1_c(\Sigma): &\ 0\le h\le 1-v_\delta^+,\,
    h=1-v_\delta^+\textup{ in }\Omega\times (-\infty,0],
    \\
    & h(y,z)= (1-v_\delta^+(y)) \eta(z) \textup{ for } (y, z)
    \in \partial\Omega_\pm\times \mathbb R \Big\},
  \end{aligned}
  \]
  where $\eta \in C^\infty(\mathbb R)$ is a cutoff function with the
  property that $\eta(z) = 1$ for all $z < 0$ and $\eta(z) = 0$ for
  all $z > 1$.  Indeed, semicontinuity and coercivity of
  $\Psi_c^{v_\delta^+}$ only depend on the behavior of the functional
  for large values of $z$. Since $v_\delta^+$ is still a local
  minimizer of $E$, the functional $\Psi_c^{v_\delta^+}$ is lower
  semicontinuous by \cite[Proposition 5.5]{lmn:arma08}. Furthermore,
  by hypothesis (H2) and Taylor formula
  \begin{eqnarray*}
    \Psi_c^{v_\delta^+}[h] = \Psi_c^v[h] + \int_\Sigma \int_0^h
    e^{c z} \Big( f_u(v  + s, y) - f_u(v_\delta^+ + s, y)
    \Big) (h - s) ds \, dx 
    \\ 
    \geq \Psi_c^v[h] - C ||v_\delta^+ - v||_{L^\infty(\Sigma)}^\gamma
    \int_\Sigma \int_0^h e^{c z}  (h - s) ds \, dx \\ 
    \geq \Psi_c^v[h] - C \delta^\gamma ||h||_{L^2_c(\Sigma)}^2,
  \end{eqnarray*}
  for some $C > 0$, implying coercivity of $\Psi_c^{v_\delta^+}$ for
  small enough $\delta$ by the argument of \cite[Proposition
  6.9]{lmn:arma08}. So the minimizer $\bar h$ of $\Psi_c^{v_\delta^+}$
  exists, and $\bar h(\cdot,z)\to 0$ uniformly in $\Omega$, as $z\to
  +\infty$ (indeed, the convergence is exponential by
  \cite[Proposition 3.3(iii)]{lmn:arma08}). Therefore, there exists $a
  > 0$ such that $\bar h(\cdot, z) \leq \tfrac14 \delta$ for all $z
  \geq a$.
   
  We finally let $u^+(y,z,t):=v_\delta^+(y) + \bar h(y, z - z_0 + b
  t)$, with $b:= c^\dag-c>0$, which is a supersolution for
  \eqref{pdec} on $\Sigma \times [0, T]$.  Notice that
  \begin{equation}\label{vip}
    u^+(\cdot, 0) =1 \qquad {\rm on\ } \Omega\times (-\infty,z_0],
  \end{equation} 
  and
  \begin{eqnarray}
    \label{eq:19}
    u^+(\cdot, z, t) \leq v + \frac{\delta}{2}  \qquad {\rm on\
    } \Omega \qquad \forall t \geq 0 ~~\forall z \geq z_0 +
    a - bt. 
  \end{eqnarray}

  \noindent{\it Comparison.} From \eqref{uzv} and \eqref{eq:vdel} for
  $\alpha$ small enough we have
  \[
  u^-(\cdot, 0) \le u_0 \qquad {\rm on\ }\Omega\times (-\infty,\bar
  z_0]\, .
  \]
  Also, by \eqref{zv}, \eqref{z0p} and \eqref{eq:vdel} for $\eta$
  small enough we have
  \begin{eqnarray}
    \label{z0v}
    u^-(\cdot, \bar z_0, t) \leq u(\cdot, \bar z_0, t)
    \qquad {\rm on\ 
    }\Omega \qquad \forall t\in [0,T]. 
  \end{eqnarray}
  Therefore, by parabolic comparison principle \cite{protter} we
  obtain
  \begin{equation}\label{eqqu} 
    u^- \le u \qquad {\rm on\ }\Omega\times
    (-\infty,\bar z_0]\times [0,T]\,. 
  \end{equation}
  In particular, by \eqref{eq:vdel} and the fact that by Theorem
  \ref{t:min} we have $\bar u(\cdot, z) < v$ for every $z \in \mathbb
  R$, it follows that
  \begin{eqnarray}
    \label{lbd}
   u(\cdot, z, t) \geq \bar u(\cdot, z - R(t)) - \delta \qquad
    \forall z \leq \bar z_0, ~\forall t \geq 0.    
  \end{eqnarray}
 
  On the other hand, in view of \eqref{vip}, the fact that $u^+ \geq v
  + \beta$, \eqref{linfcr} with $\eta$ replaced by $\eps$ at $t = 0$
  due to \eqref{eqe}, and the fact that \alert{$|R(0)|\le\delta$,} for
  every $z_0$ it is possible to choose $\eps$ small enough, so that
  \begin{equation}
    \label{eq:17}
    u_0 \leq u^+(\cdot, 0) \qquad {\rm on\ } \Sigma. 
  \end{equation}
  Then, by parabolic comparison principle we have
  \begin{equation}
    \label{eq:10}
    u \leq u^+  \qquad {\rm on\ } \Sigma \times [0,+\infty),
  \end{equation}
  and, possibly reducing $\bar z_0$ to ensure that $\bar u(\cdot, \bar
  z_0 + a + \delta) \geq v - \tfrac12 \delta$, in view of monotonicity
  of $\bar u(\cdot, z)$ by Theorem \ref{t:min}, for every $z_0 \leq
  \bar z_0$ we obtain
  \begin{eqnarray}
    \label{ubd}
    u(\cdot, z, t) \leq \bar u(\cdot, z - R(t)) + \delta \qquad
    \forall z \in [z_0 + a - bt, \bar z_0 + a] ~~ \forall t \geq
    0. 
  \end{eqnarray}
  Finally, combining \eqref{lbd} with \eqref{ubd} and \eqref{linf0},
  we get \eqref{w0infty}.
\end{proof}

We now prove a technical lemma that will be useful in the proof of
Proposition \ref{proh}.

\begin{Lemma}\label{lem:delta}
  There exist $0 < C_1 < C_2$, such that for all $|R| \leq 1$ we have
  \begin{equation}\label{eq:1}
    C_1 |R| \leq ||\TT_R \bar u - \bar u ||_{L^2_{\cd}(\Sigma)}
    \leq C_2 |R|.
  \end{equation}
  Furthermore, 
  \begin{eqnarray}
    \label{eq:12}
    ||\TT_R \bar u - \bar u ||_{L^2_{\cd}(\Sigma)} \geq C_1,
  \end{eqnarray}
  for all $|R| \geq 1$.
\end{Lemma}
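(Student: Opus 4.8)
The plan is to treat the three estimates separately, exploiting the smoothness and monotonicity of $\bar u$ established in Theorem \ref{t:min}. For the upper bound in \eqref{eq:1}, I would write $\TT_R \bar u(\cdot,z) - \bar u(\cdot,z) = \bar u(\cdot,z-R) - \bar u(\cdot,z)$ and use the fundamental theorem of calculus in $z$ to get $\TT_R \bar u - \bar u = -\int_0^R \bar u_z(\cdot, z-s)\,ds$. Taking the $L^2_\cd(\Sigma)$-norm, applying Minkowski's integral inequality, and using the translation identity $\|\TT_s \bar u_z\|_{L^2_\cd(\Sigma)}^2 = e^{\cd s}\|\bar u_z\|_{L^2_\cd(\Sigma)}^2$ together with $\bar u_z \in H^2_\cd(\Sigma) \subset L^2_\cd(\Sigma)$ by Theorem \ref{t:min}, one obtains $\|\TT_R\bar u - \bar u\|_{L^2_\cd(\Sigma)} \le e^{\cd|R|/2}\|\bar u_z\|_{L^2_\cd(\Sigma)}\,|R| \le C_2|R|$ for $|R|\le 1$, since the exponential prefactor is bounded on $[-1,1]$.

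For the lower bound in \eqref{eq:1}, I would use the fact that $\bar u_z < 0$ strictly in $\Sigma$, so the map $R \mapsto \bar u(\cdot, z-R)$ is strictly monotone. One clean route: fix a compact "core" region $Q = \overline\Omega \times [z_1, z_2]$ on which, by Corollary \ref{remlip}-type bounds (i.e.\ the uniform $C^1$ regularity of $\bar u$ from Proposition \ref{p:ini} and Theorem \ref{t:min}), one has $\bar u_z \le -m < 0$ for some $m>0$. Then for $|R|\le 1$ and $z$ in a slightly shrunk subinterval, $|\bar u(\cdot,z-R)-\bar u(\cdot,z)| \ge m|R|$ pointwise, and integrating $e^{\cd z}$ over that fixed region gives $\|\TT_R\bar u - \bar u\|_{L^2_\cd(\Sigma)}^2 \ge c\,m^2|R|^2$, i.e.\ the desired $C_1|R|$ lower bound. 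Alternatively, and perhaps more robustly, one can argue by contradiction and compactness: the function $g(R) := \|\TT_R\bar u - \bar u\|_{L^2_\cd(\Sigma)}/|R|$ is continuous on $[-1,1]\setminus\{0\}$ and extends continuously to $R=0$ with value $\|\bar u_z\|_{L^2_\cd(\Sigma)} > 0$ (this uses that $\bar u_z \not\equiv 0$, which follows from $\bar u_z < 0$), so $g$ attains a positive minimum $C_1$ on the compact set $[-1,1]$; the only thing to check is that $g(R) > 0$ for $R\neq 0$, which follows from strict monotonicity of $\bar u$ in $z$.

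For \eqref{eq:12}, the point is that for $|R|\ge 1$ the two profiles $\bar u(\cdot,\cdot)$ and $\TT_R\bar u$ are "separated" in a fixed region. Using the limits in \eqref{eqbar}, pick $z_*$ large negative so that $\|\bar u(\cdot,z) - v\|_{C^0(\overline\Omega)}$ is small for $z \le z_* + 1$, and pick a region where $\bar u$ is still appreciably below $v$ but $\TT_R\bar u$ (shifted in the direction that makes $\bar u$ larger) is close to $v$ — or simply use that $\bar u(\cdot, z) \to 0$ as $z\to+\infty$ and $\to v \not\equiv 0$ as $z\to-\infty$, so a shift by at least $1$ forces a pointwise gap of fixed size on a fixed-measure set; concretely, $\|\TT_R\bar u - \bar u\|_{L^2_\cd(\Sigma)} \ge \|\TT_R\bar u - \bar u\|_{L^2_\cd(\overline\Omega\times[z_*,z_*+1])}$, and on this fixed window the right-hand side is bounded below by a positive constant uniformly in $|R|\ge 1$ by the strict monotonicity and the asymptotics. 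One may need to split into the cases $R\ge 1$ and $R\le -1$ and in each pick the appropriate comparison window. Shrinking $C_1$ if necessary so that it serves for both \eqref{eq:1} and \eqref{eq:12} completes the proof.

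The main obstacle I anticipate is the lower bound $C_1|R| \le \|\TT_R\bar u - \bar u\|_{L^2_\cd(\Sigma)}$: one must ensure the estimate is uniform as $R\to 0$, which is exactly the statement that the "derivative" $\bar u_z$ does not vanish in $L^2_\cd$, and one must avoid any loss coming from the exponential weight degenerating as $z\to-\infty$. The compactness/continuity argument sidesteps the second issue cleanly, reducing everything to the single fact $\bar u_z < 0$ from Theorem \ref{t:min}; the routine estimate is then just continuity of $R\mapsto \TT_R\bar u$ in $L^2_\cd(\Sigma)$, which holds since $\bar u \in H^1_\cd(\Sigma)$.
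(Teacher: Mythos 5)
Your proposal is correct and follows essentially the same route as the paper: the upper bound via the fundamental theorem of calculus, Minkowski's inequality and the translation identity $\|\TT_\eta\bar u_z\|_{L^2_\cd(\Sigma)}=e^{\cd\eta/2}\|\bar u_z\|_{L^2_\cd(\Sigma)}$; the lower bound from $\bar u_z<0$ (the paper uses the mean value theorem on a compact set $\Sigma_0\Subset\Sigma$, while your continuity-of-$g$ argument at $R=0$ plus compactness of $[-1,1]$ is an equivalent packaging); and \eqref{eq:12} from monotonicity of $R\mapsto\bar u(\cdot,z-R)$, which the paper states as monotonicity of $\|\TT_R\bar u-\bar u\|_{L^2_\cd(\Sigma)}$ in $|R|$. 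One small caveat on your first route for the lower bound: you cannot take the core region to be $\overline\Omega\times[z_1,z_2]$, since in the presence of Dirichlet conditions $\bar u_z$ vanishes on $\partial\Sigma_\pm$ and no uniform bound $\bar u_z\le -m<0$ holds up to the boundary; one must take a compact subset of the open cylinder $\Sigma$ (as the paper does with $\Sigma_0\Subset\Sigma$), after which your argument, or your alternative compactness argument, goes through.
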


\begin{proof}
  Let us first prove the upper bound. Notice that, thanks to Theorem
  \ref{t:min}, the functions $\bar u$ belongs to $H^1_{\cd}(\Sigma)$,
  hence in particular the map $\eta\mapsto \TT_\eta \bar u$ defines a
  differentiable curve in $L^2_{\cd}(\Sigma)$. A direct computation
  then gives
  \begin{eqnarray*}
    ||\TT_R \bar u - \bar u ||_{L^2_{\cd}(\Sigma)}
    &=&
    \left\|\int_{0}^{R} 
      \TT_\eta \bar u_z \,d\eta\right\|_{L^2_{\cd}(\Sigma)}
   \le 
    \int_{-|R|}^{|R|}
    ||\TT_\eta \bar u_z||_{L^2_{\cd}(\Sigma)}\,d\eta
    \\
    &\le&
    \left(\int_{-|R|}^{|R|}e^{\frac{\cd}{2}\eta}\,d\eta\right)
    ||\bar u_z||_{L^2_{\cd}(\Sigma)}
    \le C |R|,
  \end{eqnarray*}
  where we used the identity $||\TT_\eta \bar u_z||_{L^2_\cd(\Sigma)}
  = e^{\cd \eta \over 2} ||\bar u_z||_{L^2_\cd(\Sigma)}$.

  To obtain the lower bound in \eqref{eq:1}, we observe that for any
  $\Sigma_0 \Subset \Sigma$ compact, we have
  \begin{eqnarray}
    \label{eq:13}
    ||\TT_R \bar u - \bar u||_{L^2_{\cd}(\Sigma)}^2
    \geq \int_{\Sigma_0} e^{\cd z} (\bar u(y, z - R) - \bar u(y, z)
    )^2 dx  \nonumber \\ 
    =  R^2 \int_{\Sigma_0} e^{\cd z} \bar u_z^2 (y, z - \tilde R(y
    , z))
    \, dx, 
  \end{eqnarray}
  for some $0 < |\tilde R(y , z)| < |R|$. The lower bound then follows
  from the fact that $\bar u_z < 0$ in $\Sigma$ and, hence, $|u_z (y,
  z - \tilde R(y , z))|$ is bounded away from zero in $\Sigma_0$, as
  long as $|R| \leq 1$.  Finally, to get \eqref{eq:12} we observe that
  $||\TT_R \bar u - \bar u||_{L^2_{\cd}(\Sigma)}^2$ is monotonically
  increasing in $|R|$.
\end{proof}

We now look for a suitable translation of $\bar u$ which serves
as the best approximation, in some sense, to the solution of
\eqref{pdec}. For a given $u\in H^1_\cd(\Sigma)$ and $R\in\R$, we
define the function $h$ as:
\begin{eqnarray}
  \label{h}
  h(u,R):=\frac12 \int_\Sigma e^{\cd z}(u(y,z)-\bar u(y, z-R))^2 dx
  \geq 0. 
\end{eqnarray}
In the following proposition, we show that the optimal approximation
to $u$ can be naturally introduced by minimizing $h$ in \eqref{h} with
respect to $R$.

\begin{Proposition}\label{proh}
  For any $\delta >0$ sufficiently small there exists $\eps > 0$ such
  that, for any $u \in H^1_\cd(\Sigma)$ satisfying $\| u-\bar
  u\|_{L^2_\cd(\Sigma)} \leq \eps$ the function $h(u,\cdot)$ attains
  its global minimum. Furthermore, this minimum is unique, is
  contained in $(-\delta, \delta)$ and there are no other critical
  points in $(-\delta,\delta)$.
\end{Proposition}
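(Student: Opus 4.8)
The plan is to study the smooth function $g(R) := h(u,R)$ on $\mathbb R$ and show that, for $u$ close to $\bar u$ in $L^2_\cd$, it is a small perturbation of $g_0(R) := h(\bar u, R) = \tfrac12\|\TT_R\bar u - \bar u\|_{L^2_\cd(\Sigma)}^2$, which by Lemma \ref{lem:delta} behaves like $R^2$ near $0$ and stays bounded away from $0$ for $|R|\ge 1$. First I would record that, since $\bar u\in H^1_\cd(\Sigma)$ with $\bar u_z\in H^2_\cd(\Sigma)$ (Theorem \ref{t:min}), the curve $R\mapsto \TT_R\bar u$ is $C^2$ into $L^2_\cd(\Sigma)$, and compute
\[
g'(R) = -\int_\Sigma e^{\cd z}(u(y,z)-\bar u(y,z-R))\,\bar u_z(y,z-R)\,dx,
\qquad
g''(R) = \int_\Sigma e^{\cd z}\Big( \bar u_z^2(y,z-R) - (u-\TT_R\bar u)\,\bar u_{zz}(y,z-R)\Big)\,dx .
\]
At $u=\bar u$ one has $g_0'(0)=0$ and $g_0''(0) = \|\bar u_z\|_{L^2_\cd(\Sigma)}^2 =: 2\mu_0 > 0$ (strictly positive since $\bar u_z<0$). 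Using $\|u-\TT_R\bar u\|_{L^2_\cd}\le \|u-\bar u\|_{L^2_\cd}+\|\bar u - \TT_R\bar u\|_{L^2_\cd}\le \eps + C_2|R|$ from Lemma \ref{lem:delta}, together with the fact that $R\mapsto \TT_R$ acts by the explicit weight $e^{\cd R/2}$ on $L^2_\cd$-norms (so translates of $\bar u_z$ and $\bar u_{zz}$ have controlled norms), the maps $R\mapsto g'(R)$ and $R\mapsto g''(R)$ are Lipschitz on $[-1,1]$ with constants independent of $u$, and $g''$ is within $O(\eps)+O(|R|)$ of the constant $2\mu_0$.

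Next I would run the following three-step argument. \emph{(i) Existence of a global minimum:} since $u-\bar u\in L^2_\cd(\Sigma)$, $g(R)\ge \tfrac12(\,\|\bar u - \TT_R\bar u\|_{L^2_\cd} - \eps\,)^2 \to \infty$ as $|R|\to\infty$ by the monotonicity in Lemma \ref{lem:delta} (inequality \eqref{eq:12} and its monotone strengthening); being continuous and coercive, $g$ attains a global minimum $R_\infty$. Moreover $g(0)=h(u,0)\le \tfrac12\eps^2$, while for $|R|\ge \delta$ we have $g(R)\ge \tfrac12(C_1\delta - \eps)^2 > \tfrac12\eps^2$ once $\eps< \tfrac12 C_1\delta$ (using \eqref{eq:1} for $\delta\le |R|\le 1$ and \eqref{eq:12} for $|R|\ge 1$); hence every global minimizer lies in $(-\delta,\delta)$. \emph{(ii) Strict convexity on $(-\delta,\delta)$:} choosing $\delta$ small so that $C_2\delta < \mu_0$ and $\delta\cdot\sup_{|s|\le 1}e^{\cd s/2}\|\bar u_{zz}\|_{L^2_\cd}<\tfrac12\mu_0$, and then $\eps$ small, the estimate above gives $g''(R)\ge \mu_0 >0$ for all $R\in(-\delta,\delta)$. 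Therefore $g$ is strictly convex there, so it has at most one critical point in $(-\delta,\delta)$, and that critical point (if it exists) is a strict local minimum. \emph{(iii) Conclusion:} the global minimizer $R_\infty$ from (i) is an interior critical point of $g$ on $(-\delta,\delta)$, hence by (ii) it is \emph{the} unique critical point of $g$ in $(-\delta,\delta)$ and the unique global minimum.

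The step I expect to be the main obstacle is making the convexity estimate in (ii) uniform in $u$: one must control the "bad" term $\int_\Sigma e^{\cd z}(u-\TT_R\bar u)\,\bar u_{zz}(y,z-R)\,dx$ in $g''$ by Cauchy–Schwarz against $\|\bar u_{zz}\|_{L^2_\cd}$, which requires $\bar u_{zz}\in L^2_\cd(\Sigma)$ — this follows from $\bar u_z\in H^2_\cd(\Sigma)$ in Theorem \ref{t:min} — and requires bookkeeping of how the exponential weight interacts with the translation (the factor $e^{\cd R/2}$, harmless for $|R|\le 1$). Everything else is a soft coercivity/continuity argument built directly on Lemma \ref{lem:delta}; once the quantitative lower bound $g''\ge \mu_0$ is in hand, uniqueness and the absence of spurious critical points in $(-\delta,\delta)$ are immediate from strict convexity. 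I would remark that this also explains the comment after Lemma \ref{lemlin}: strict convexity of the \emph{finite-dimensional} reduction $g(R)$ is elementary, whereas strict convexity of $\Phi_\cd$ on an infinite-dimensional slice is not available because $\Phi_\cd$ need not be $C^2$ on $H^1_\cd(\Sigma)$.
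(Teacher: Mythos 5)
Your argument is essentially the paper's: you localize any minimizer of $h(u,\cdot)$ to $(-\delta,\delta)$ via Lemma \ref{lem:delta} and the bound $h(u,0)\le\tfrac12\eps^2$, and then obtain uniqueness of the critical point from a uniform lower bound $h''(u,R)\ge \mu_0>0$ on $(-\delta,\delta)$; the paper reaches the same bound $h''(u,R)\ge e^{\cd R}\|\bar u_z\|^2_{L^2_\cd(\Sigma)}-C(\eps+|R|)$ by writing $h''=\cd h'+\int_\Sigma e^{\cd z}u_z\,\TT_R\bar u_z\,dx$ and integrating by parts, which is precisely the identity linking it to your formula containing $\bar u_{zz}$, with the same use of the regularity of $\bar u_z$ from Theorem \ref{t:min} and of the weight factor $e^{\cd R/2}$. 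One claim in your step (i) is false but inessential: $h(u,\cdot)$ is \emph{not} coercive, since $\|\TT_R\bar u\|_{L^2_\cd(\Sigma)}=e^{\cd R/2}\|\bar u\|_{L^2_\cd(\Sigma)}\to 0$ as $R\to-\infty$, so $h(u,R)\to\tfrac12\|u\|^2_{L^2_\cd(\Sigma)}$ rather than $+\infty$; existence of the global minimum instead follows from your own next inequality, $h(u,R)\ge\tfrac12(C_1\delta-\eps)^2>\tfrac12\eps^2\ge h(u,0)$ for $|R|\ge\delta$, which confines the infimum to the compact interval $[-\delta,\delta]$, exactly as in the paper. (Also, your displayed $g'(R)$ carries a spurious minus sign, which does not affect any estimate.)
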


\begin{proof}
  First of all, observe that $\eta\mapsto \bar u(y,z-\eta)$ is a twice
  differentiable curve in $L^2_\cd$, thanks to Theorem \ref{t:min}. By
  assumption
  \begin{eqnarray}
    \label{eq:14}
    \inf_{R \in \mathbb R} h(u, R) \leq h(u, 0) \leq \eps^2. 
  \end{eqnarray}
  Furthermore, from the lower bound in Lemma \ref{lem:delta} we get
  that
  \begin{eqnarray}
    \label{eq:15}
    C \min \{1, |R| \} \leq  ||\TT_R \bar u - \bar u
    ||_{L^2_{\cd}(\Sigma)} \qquad \qquad \nonumber \\ 
    \leq  ||u - \bar u||_{L^2_{\cd}(\Sigma)}
    + \sqrt{2 h(u, R)} \leq \eps + \sqrt{2 h(u, R)},
  \end{eqnarray}
  for some $C > 0$. Therefore, by continuity of $h(u, \cdot )$ its
  minimum is attained and lies in $(-\delta, \delta)$ for any $\delta
  > 0$, provided that $\eps$ is sufficiently small.  

  We now calculate the first and the second derivative of $h(u,\cdot)$
  with respect to $R$:
  \begin{eqnarray}
    \label{hp}
    h'(u, R) & = & \int_\Sigma e^{\cd z} (u(y,z)-\bar u(y,
    z- R)) \bar u_z(y, z - R) \, dx, \\
    \label{hpp}
    h''(u, R) & = & \cd h'(u, R) + \int_\Sigma e^{\cd z} u_z(y, z)
    \bar u_z(y, z - R)  \, dx,
  \end{eqnarray}
  \alert{where from now on the prime denotes the derivative with
    respect to $R$.}  Recalling Lemma \ref{lem:delta}, for all
  $|R|\leq \delta \leq 1$ we have
  \begin{eqnarray}
    |h'(u, R)| &\leq& C \, || u - \TT_R \bar u||_{L^2_\cd(\Sigma)} 
    \nonumber 
    \\
    &\leq& 
    C \left( ||u - \bar u||_{L^2_\cd(\Sigma)}
    \right. + \left.  || \bar u - \TT_R \bar
      u||_{L^2_\cd(\Sigma)}\right) 
    \nonumber
    \\ \label{hpest}
    &\le& C \, (\eps + |R|), 
  \end{eqnarray}
  for some $C > 0$. Now, observe that upon integration by parts we
  have
  \begin{equation*}
    \begin{aligned}
      & \int_\Sigma e^{\cd z} \bar u_z(y, z - R) (u_z(y, z) - \bar
      u_z(y, z - R)) \, dx \\ 
      & = - \int_\Sigma e^{\cd z} (\bar
      u_{zz}(y, z - R) + \cd \bar u_z(y, z - R)) (u(y, z) - \bar u(y,
      z - R)) \, dx .
    \end{aligned}
  \end{equation*}
  Therefore, since $\bar u_z \in H^1_\cd(\Sigma)$ by Theorem
  \ref{t:min}, applying Cauchy-Schwarz inequality we obtain
  \begin{eqnarray}
    \label{uzuz}
    \left| \int_\Sigma e^{\cd z} \bar u_z(y, z - R)  (u_z(y, z) -
      \bar u_z(y, z - R)) \,
      dx \right| \nonumber \\ \leq C e^{\cd R/2} ||u - \TT_R \bar
    u||_{L^2_\cd(\Sigma)},  
  \end{eqnarray}
  for some $C > 0$. Applying this estimate to \eqref{hpp} and
  combining it with the estimates in \eqref{hpest}, we obtain
  \begin{eqnarray}\label{ellade}
    h''(u, R)  &\ge&  
    \alert{e^{\cd R}} \,|| \bar u_z||^2_{L^2_\cd(\Sigma)} - C(\eps +
    |R|),  
  \end{eqnarray}
  for some constant $C>0$. This implies that $h''(u, R) \geq M$ for
  some $M > 0$ and all $|R| \leq \delta$, provided that $\delta$ and
  $\eps$ are small enough.  Hence $h(u, \cdot)$ is a strictly convex
  function on $[-\delta, \delta]$, and the minimum of $h(u, \cdot)$ is
  the unique critical point in $(-\delta, \delta)$.
\end{proof}

~

\alert{Recalling the comment following Lemma \ref{lemlin}, we can now
  formulate a nonlinear analog of the result of that lemma.

\begin{Remark}\rm 
  Suppose that $u$ is sufficiently close to $\bar u$ in
  $L^2_\cd(\Sigma) \cap L^\infty(\Sigma)$. Then by Proposition
  \ref{proh} there exists $R_0 \in \mathbb R$, such that the function
  $h(u, R)$ in \eqref{h} is minimized with respect to $R$ at $R =
  R_0$. Therefore, we have that $u - \TT_{R_0} \bar u$ is orthogonal
  to $\TT_{R_0} \bar u$ in $L^2_\cd(\Sigma)$, and so by Lemma
  \ref{lemlin}, hypothesis (H2) and the minimizing property of $\bar
  u$ we have $\Phi_\cd[u] \geq \tfrac12 K ||u - \TT_{R_0} \bar
  u||_{L^2_\cd(\Sigma)}^2$, where $K > 0$ is as in Lemma
  \ref{lemlin}. Hence $\bar u$ is, in fact, a strict local minimizer
  of $\Phi_\cd$ in the above sense.
\end{Remark}}

We now conclude the proof of Theorem \ref{t:wnstab}.

\medskip
\emph{Proof of Theorem \ref{t:wnstab}.}
  Let $\delta > 0$ be sufficiently small, so that Proposition
  \ref{proh} applies with $u = u_0$ and all $0 < \eps \leq \eps_0$,
  for some $\eps_0 > 0$. Then by Propositions \ref{p:ini} and
  \ref{proh} there exists $T_0 > 0$, such that there exists a
  minimizer $R(t)$ of $h(u(\cdot, t), R)$ in $R$ for each $t \in [0,
  T_0]$. Furthermore, $R(t)$ is the unique critical point of
  $h(u(\cdot, t), R)$ in $(-\delta, \delta)$. In fact, $R(t)$ is a
  continuously differentiable function of $t$ on $[0, T_0]$. Indeed,
  since $R(t)$ minimizes $h(u(\cdot, t), R)$ in $R$, we have 
  \begin{eqnarray}
    \label{hp0}
    h'(u(\cdot, t), R) = \int_\Sigma e^{\cd z} (u(y,z, t)-\bar u(y,
    z - R)) \bar u_z(y, z - R) \, dx = 0,
  \end{eqnarray}
  whenever $R = R(t)$. In view of the continuity of $u_t(\cdot, t)$ in
  $L^2_\cd(\Sigma)$ guaranteed by Proposition \ref{p:ini}, as well as
  Theorem \ref{t:min} and Lemma \ref{lem:delta}, the function in
  \eqref{hp0} is continuously differentiable in $R$ and $t$ in some
  small neighborhood of the origin. Then, arguing as in Proposition
  \ref{proh} one can see that $h''(u(\cdot, t), R) > 0$ there, so we
  can apply the implicit function theorem to \eqref{hp0}.
  Furthermore, after some algebra we obtain
  \begin{eqnarray}
    \label{Rt}
    {dR(t) \over dt} = -{\int_\Sigma e^{\cd z} u_t(y, z, t) \bar
      u_z(y, z - R(t)) \, 
      dx \over \int_\Sigma e^{\cd z} u_z(y, z, t) \bar u_z(y, z - 
      R(t)) \, dx}.
  \end{eqnarray}

  For \alert{$t \in (0, T_0]$ and $u$ solving (\ref{pdec})} we define
  \begin{eqnarray}
    w(y, z, t) := u(y, z, t) - \bar u(y, z - R(t)).
  \end{eqnarray}
  The function $w$ satisfies the equation
  \begin{eqnarray}
    \label{w}
    w_t= \Delta w +\cd w_z + {dR \over dt} \, \bar u_z + f_u(\tilde u,
    y) w, 
  \end{eqnarray}
  for some $\tilde u$, with $|\tilde u - \TT_{R(t)} \bar u|\le
  |w|$. Also, by construction we have
  \begin{eqnarray}
    \label{ort}
    \int_\Sigma e^{\cd z} w(y, z, t) \bar u_z(y, z - R(t)) \, dx = 0.
  \end{eqnarray} 
  We now introduce
  \[
  m(t):= \int_\Sigma e^{\cd z} w^2(x, t) \, dx \qquad t\ge 0,
  \]
  so that $m(0) \leq \eps^2$.  Multiplying (\ref{w}) with $e^{\cd z}
  w$ and integrating over $\Sigma$, we obtain
  \begin{eqnarray}
    \label{norm2}
    \frac{dm(t)}{dt}  = -2\int_\Sigma
    e^{\cd z}  \left( |\nabla w|^2 - f_u(\tilde u, y) w^2  \right) dx
  \end{eqnarray}
  where we used (\ref{ort}) to erase the term multiplying $dR/dt$.  By
  Lemma \ref{lemlin} we have
  \begin{equation}\label{normdue}
    \begin{aligned}
      \int_\Sigma e^{\cd z} \left( |\nabla w|^2 - f_u(\tilde u, y) w^2
      \right) dx =& \int_\Sigma e^{\cd z} \left( |\nabla w|^2 -
        f_u(\bar u, y) w^2 \right) dx
      \\
      &+ \int_{\Sigma} e^{\cd z} \left( f_u(\bar u,y) - f_u(\tilde u,
        y) \right) w^2 dx
      \\
      \ge& K m(t) + \int_{\Sigma} e^{\cd z} \left( f_u(\bar u,y) -
        f_u(\tilde u, y) \right) w^2 dx .
    \end{aligned}
  \end{equation}
  Possibly reducing $\delta$ and recalling assumption (H2), we have
  \[
  \int_{\{ |w|<\delta\}} e^{\cd z} \left| f_u(\bar u,y) - f_u(\tilde
    u, y) \right| w^2 dx \le \frac K2 m(t),
  \]
  which, combined with \eqref{normdue}, gives
  \[
  \begin{aligned}
    \int_\Sigma e^{\cd z} \left( |\nabla w|^2 - f_u(\tilde u, y) w^2
    \right) dx \ge & \frac K2 m(t) - \int_{\{|w|\ge\delta\}} e^{\cd z}
    \left| f_u(\bar u,y) - f_u(\tilde u, y) \right| w^2 dx
    \\
    \ge & \frac K2 m(t) - \int_{-\infty}^{z_\delta(t)}\int_{\Omega}
    e^{\cd z} \left| f_u(\bar u,y) - f_u(\tilde u, y) \right| w^2 \,
    dy \, dz
    \\
    \ge & \frac K2 m(t) - C e^{\cd z_\delta(t)},
  \end{aligned}
  \]
  for some $C>0$, where $z_\delta(t)$ is defined in \eqref{zeta}.

  We can now apply Proposition \ref{p:w2close} with
  $z_0=\frac{4}{\cd}\log\omega$, which yields some $\eps > 0$ and
  $\eta > 0$, with $\eta$ independent of $\omega$, and $T \in (0,
  T_0]$ depending on $\eta$.  We then get
  \begin{equation}\label{norme}
    \begin{aligned}
      \int_\Sigma e^{\cd z} \left( |\nabla w|^2 - f_u(\tilde u, y) w^2
      \right) dx \ge& \frac K2 m(t) - C e^{\cd(z_0-bt)} \ge \frac K2
      m(t) - C \omega^4 e^{-\cd bt}
    \end{aligned}
  \end{equation}
  for some $b > 0$ and $C > 0$ and all $t\in [0,T]$.  From
  \eqref{norm2} and \eqref{norme} we, therefore, obtain
  \begin{equation}\label{mdt}
    {dm(t) \over dt} \le - K m(t) + 2C \omega^4 e^{-\cd bt}
    \qquad \forall t \in [0, T],
  \end{equation}
  which gives
 \begin{eqnarray}
    \label{exp}
    ||w(\cdot,t)||_{L^2_\cd(\Sigma)}
    \le 
    M \omega^2 e^{-\sigma t}
    \qquad \forall t \in [0,T],
  \end{eqnarray}
  for some $\sigma > 0$ and $M > 0$, provided that $\eps$ is small
  enough. 
  
  To estimate the behavior of $R(t)$, \alert{we substitute $u_t=w_t-
    \bar u_z dR/dt$ into (\ref{Rt}) and take into account (\ref{w})
    and (\ref{trav}) differentiated in $z$, noting that $\bar u_z \in
    H^2_c(\Sigma)$ by Theorem \ref{t:min}. After a few integrations by
    parts we obtain}
  \begin{align}
    \label{Rtt}
    {dR(t) \over dt}  = {\int_\Sigma e^{\cd z} (f_u(\bar
      u(y, z - R(t)),y) - f_u(\tilde u, y)) w(y, z, t) \bar u_z(y, z -
      R(t)) dx  \over
      \int_\Sigma e^{\cd z} (\bar u_z(y, z - R(t)) + w_z(y, z, t)) \bar
      u_z(y, z - R(t)) dx}.  
  \end{align}
  By the same argument as the one leading to \eqref{uzuz}, we have
  \begin{eqnarray}
    \label{uzwz}
    \left| \int_\Sigma e^{\cd z} \bar u_z(y, z - R(t)) w_z(y, z, t) \,
      dx \right| \leq C e^{\cd R(t)/2} ||w(\cdot, t)||_{L^2_\cd(\Sigma)}.
  \end{eqnarray} 
  With hypothesis (H2), this leads to the following estimate for
  $dR/dt$:
  \begin{eqnarray}
    \label{Rte}
    \left| {dR(t) \over dt} \right| &\le& {C ||w(\cdot,
      t)||_{L^2_\cd(\Sigma)} \over \widetilde C e^{\cd R(t)/2}
      -  ||w(\cdot, t)||_{L^2_\cd(\Sigma)}}
  \end{eqnarray}
  for some constants $C,\widetilde C > 0$, \alert{provided that
    $\omega$ is so small that by (\ref{exp}) the denominator in
    (\ref{Rte}) is positive for all $t\in [0,T]$.} Then we have
  \begin{equation}\label{Rdt}
    \left| \frac{d R(t)}{d t} \right| \leq C \omega^2 e^{-\sigma t}
    \qquad \forall t \in [0, T],
  \end{equation}
  and hence
  \begin{equation}
    \label{modRt}
    |R(t)| \leq \widetilde M  \omega^2 \leq\omega
    \qquad \forall t \in [0, T],
  \end{equation}
  for some $\widetilde M > 0$ and $\omega$ small enough.  Moreover, since
  $\eta$ and $\delta$ are independent of $\omega$, \eqref{exp} also implies that
  \eqref{equesto} holds uniformly in $T$ for $\omega$ small enough, whence  
  $T = T_0$. Indeed, if $T_1 < T_0$ is the maximum
  value of $T$ for which Proposition \ref{proh} can be applied, then
  by \eqref{exp} the left-hand side of \eqref{equesto} is bounded by
  $\eta/2$ at $t = T_1$, provided that $\omega$ is sufficiently
  small. Therefore, by continuity of $w(\cdot, t)$ in
  $L^2_\cd(\Sigma)$ guaranteed by Proposition \ref{p:ini}, the
  inequality in \eqref{equesto} also holds for some interval beyond
  $T_1$, contradicting the maximality of $T_1$.

  Moreover, by \eqref{modRt} the function $R(t)$ is in fact defined
  and continuously differentiable for all $t \geq 0$. Indeed, let us
  take $T_0$ to be the largest possible value for which $||u(\cdot, t)
  - \bar u||_{L^2_\cd(\Sigma)} \leq \eps_0$ for all $t \in [0, T_0]$,
  so that Proposition \ref{proh} still applies. In view of Lemma
  \ref{lem:delta}, \eqref{exp} and \eqref{modRt}, we have
  \begin{eqnarray}
    \label{eq:16}
    ||u(\cdot, t) - \bar u||_{L^2_\cd(\Sigma)} &\leq& ||u(\cdot, t) -
    \TT_{R(t)} \bar u||_{L^2_\cd(\Sigma)} \nonumber 
    \\ 
    &&\ + ||\bar u -
    \TT_{R(t)} \bar u||_{L^2_\cd(\Sigma)} 
    \\ \nonumber
    &\leq& M \omega^2 \qquad \forall t \in [0, T_0],
  \end{eqnarray}
  for some $M > 0$.  Therefore, choosing $\omega$ so small that the
  right-hand side of \eqref{eq:16} is bounded by $\tfrac12 \eps_0$
  and, once again, taking into account continuity of $w(\cdot, t)$ in
  $L^2_\cd(\Sigma)$, we can then make sure that the assumptions of
  Proposition \ref{proh} are satisfied on some interval beyond $T_0$,
  contradicting maximality of $T_0$. We thus proved that we can take
  an arbitrarily large $T_0 > 0$ in all the arguments above.

  Finally, using \eqref{exp} and \eqref{Rte} again and keeping in mind
  that by \eqref{modRt} the denominator in (\ref{Rte}) is bounded away
  from zero, we finally obtain that the limit $\displaystyle R_\infty
  := \lim_{t \to +\infty} R(t)$ exists, and recalling Lemma
  \ref{lem:delta} we have
  \begin{eqnarray}\label{est}
    ||u(\cdot, t) - \TT_{R_\infty} \bar u||_{L^2_\cd(\Sigma)} \leq 
    \omega e^{-\sigma t} \qquad \forall t \geq 0,
  \end{eqnarray}
  for some $\sigma > 0$, provided that $\omega$ is small enough,
  yielding the thesis of the theorem.
  \endproof

\section{Proof of the main result}\label{sec:main}

We will prove Theorem \ref{t:main} in the reference frame moving with
speed $\cd$, that is, we will prove that if $u$ is the solution of
\eqref{pdec} with the initial datum satisfying the assumptions of
Theorem \ref{t:main}, then it converges in $H^2_\cd(\Sigma)$ to
$\TT_{R_\infty} \bar u$ for some $R_\infty$ as $t \to \infty$. The
result then follows by noting that $\TT_{\cd t} u$ solves \eqref{pde}
with the same initial condition, upon applying $\TT_{-R_\infty}$.

  From now on, $u$ always refers to the solution of \eqref{pdec}. We
  divide the proof into five steps.

  \paragraph{Step 1} We begin by constructing an appropriate pair of
  barrier solutions of (\ref{pdec}) to ensure that the solution of the
  initial value problem for (\ref{pdec}) does not move too far towards
  the ends of the cylinder. The barriers are obtained by considering
  the solutions $\bar u^\pm$ of (\ref{pdec}) with the initial data
  \begin{eqnarray}
    \label{eq:u_p}
    \bar u_0^-(y, z) & = & \min \{ u_0(y, z - R), \bar u(y, z) \}, \\ 
    \bar u_0^+(y, z) & = & \max \{ u_0(y, z + R), \bar u(y, z) \},
  \end{eqnarray}
  where $R > 0$ is so big that both $\bar u_0^\pm$ satisfy the
  assumptions of Theorem \ref{t:wnstab}, provided that $\alpha$ in
  \eqref{inidat} is small enough. Indeed, by definition and
  \eqref{eqbar} the assumption in \eqref{inidat} is satisfied for both
  $\bar u_0^\pm$. Moreover, for $\bar u_0^+ - \bar u = \max (\TT_{-R}
  u_0 - \bar u, 0)$ we have
  \begin{eqnarray}
    0 \leq \bar u_0^+ - \bar u  \leq \TT_{-R} u_0 \to 0
    ~~\mathrm{in}~ L^2_\cd(\Sigma) ~~\mathrm{as}~R \to +\infty,
  \end{eqnarray}
  so $\bar u_0^+ \to \bar u$ in $L^2_\cd(\Sigma)$ as $R \to
  +\infty$. By a similar argument for $\bar u - \bar u_0^- = \max
  (\bar u - \TT_R u_0, 0)$ we have
  \begin{eqnarray}
    0 \leq \bar u - \bar u_0^- \leq \bar u \to 0 
    ~~\mathrm{in}~ L^2_\cd(\Omega \times (M, +\infty))
    ~~\mathrm{as}~M \to +\infty,
  \end{eqnarray}
  uniformly in $R$.  At the same time, by boundedness of $\bar u_0^-$
  and $\bar u$ we have $||\TT_R u_0 - \bar u||_{L^2_\cd(\Omega \times
    (-\infty, -M))} \to 0$ as $M \to +\infty$, again, uniformly in
  $R$.  Finally, in view of \eqref{inidat}, \eqref{eqbar} and the Hopf
  lemma, for every $M > 0$ and $R > 0$ large enough we have $|\{ x \in
  \Omega \times (-M, M): \bar u_0^-(x) < \bar u(x) \}| \leq C \alpha$,
  with some $C = C(M) > 0$, for small enough $\alpha$.  Therefore, it
  is possible to choose $M$ large enough, then $R$ large enough, and
  then $\alpha$ small enough, so that $||\bar u - \bar u_0^-
  ||_{L^2_\cd(\Sigma)}$ can be made as small as desired. Note that
  both functions $\bar u_0^\pm$ obtained above satisfy \eqref{inidat}
  uniformly in $R$.

  We now claim that $\bar u^\pm(y, z \mp R, t)$, i.e., the solutions
  of \eqref{pdec} with initial data $\bar u_0^\pm(y, z \mp R)$, are
  the appropriate barrier solutions. Indeed, by construction the
  initial data $u_0$ is sandwiched between $\bar u^\pm(y, z \mp R, t)$
  at $t = 0$, hence by parabolic comparison principle \cite{protter}
  the solution of (\ref{pdec}) will remain so for all times. By
  Theorem \ref{t:wnstab} we know that there exist $R_\infty^\pm$ such
  that
  \begin{equation}\label{eqqa}
    || \bar
    u^\pm(y,z\mp R,t)-\bar u(y, z \mp
    R_\infty^\pm)||_{L^2_\cd(\Sigma)} \le e^{-\sigma t}, 
  \end{equation}
  for some $\sigma>0$ and any $z_0 \in \mathbb R$, provided that
  $\alpha$ is sufficiently small and $R$ is sufficiently large.

  \paragraph{Step 2} We now use the functional $\Phi_\cd$ as a
  Lyapunov functional to establish existence of a sequence $t_n \to
  +\infty$ on which $u(\cdot, t_n)$ converges to a translate of $\bar
  u$.  Indeed, multiplying (\ref{pdec}) by a test function
    $\varphi \in C^\infty_0(\mathbb R^n)$ vanishing on $\partial
    \Sigma_{\pm}$ and integrating over $\Sigma$, we can write
    (\ref{pdec}) in the weak form as
  \begin{eqnarray}
    \label{pdecw}
    \int_\Sigma e^{\cd z} \varphi \, u_t \,dx = - \int_\Sigma
    e^{\cd z} \left( \nabla  u \cdot \nabla \varphi - f(u, y) \varphi
    \right) dx,   
  \end{eqnarray}
  where the integral in the right-hand side is the G\^ateaux
  derivative of $\Phi_\cd$ at $u(\cdot, t)$ in the direction of
  $\varphi$. Therefore, (\ref{pdec}) is the gradient flow generated by
  $\Phi_\cd$ in $L^2_\cd(\Sigma)$, and in view of \eqref{h3c} for all
  $t_2\ge t_1 > 0$, we have
  \begin{equation}\label{eqenergy}
    \Phi_\cd[u(\cdot,t_1)] - \Phi_\cd[u(\cdot,t_2)] =  
    \int_{t_1}^{t_2}\| u_{t}(\cdot,t)\|^2_{L^2_\cd(\Sigma)}dt.
  \end{equation}
  Letting $t_2\to +\infty$ and recalling that $\Phi_\cd[u]$ is bounded
  below by Theorem \ref{t:min}, from \eqref{eqenergy} it follows that
  there exists a sequence $t_n\to+\infty$ such that
  \begin{equation}\label{equt}
    \lim_{n\to +\infty}\| u_{t}(\cdot,t_n)\|_{L^2_\cd(\Sigma)} = 0.
  \end{equation}
  Also note that, since $0\le u(y,z,t)\le \bar u^+(y, z - R,t)$ for all
  $(y,z)\in\Sigma$ and $t\ge 0$, and $\bar u^+(\cdot, t)$ is uniformly
  bounded in $L^2_\cd(\Sigma)$ by Theorem \ref{t:wnstab}, in view of
  hypotheses (H1)--(H2) we have that
  \begin{equation}\label{h1bound}
    \| u\|_{H^1_\cd(\Sigma)}^2 \leq 2 \Phi_\cd[ u] +
    C \| u\|_{L^2_\cd(\Sigma)}^2 \leq M(t_0) \qquad \forall t\ge
    t_0>0 
  \end{equation}
  for some constants $C, M(t_0) > 0$.  

  From \eqref{equt} and \eqref{h1bound}, up to a possible subsequence,
  we can pass to the limit in \eqref{pdecw} and get that $ u(\cdot,
  t_n)$ converges to a critical point of $\Phi_\cd$ weakly in
  $H^1_\cd(\Sigma)$. In fact, the limit must be a non-trivial critical
  point $u_\infty$ of $\Phi_\cd$, in view of Step 1, hence a translate
  of $\bar u$ by \cite[Propositions 3.2 and 3.5]{lmn:arma08} and
  Theorem \ref{t:min}.

  \paragraph{Step 3} We now prove that $u(\cdot, t_n)\to u_\infty$ in
  $L^2_\cd(\Sigma)$.  Notice first that since both $u$ and $u_\infty$
  are uniformly bounded, for a given $\eps>0$ we can find $M$ such
  that
  \begin{equation}\label{stimauno}
    \| u(\cdot, t_n)-u_\infty\|_{L^2_\cd(\Omega\times
      (-\infty,-M])}\le \eps . 
  \end{equation}
  Moreover, since $u(y,z, t)\le \bar u^+(y,z-R,t)$, from \eqref{eqqa}
  it also follows that
  \begin{eqnarray}\label{stimadue}
    \nonumber
    \| u(\cdot, t_n)\|_{L^2_\cd(\Omega\times (M,+\infty))} &\le&
    \| \bar u^+(\cdot, t_n)\|_{L^2_\cd(\Omega\times (M,+\infty))} 
    \\ &\le&
    \| \bar u(y,z-R^+_\infty)\|_{L^2_\cd(\Omega\times (M,+\infty))} 
    \\ \nonumber
    && +
    \| \bar u^+(\cdot, t_n)-\bar u(y,z-R^+_\infty)\|_{L^2_\cd(\Sigma)} 
    \\ \nonumber
    &\le& \eps,  
  \end{eqnarray}
  for $M$ big enough and all $n \geq N$, for some $N = N(M) \in
  \mathbb N$. Recalling that $H^1_\cd(\Sigma)$ compactly embeds into
  $L^2_\cd(\Omega\times (-M,M))$, from Proposition \ref{p:ini},
  \eqref{stimauno} and \eqref{stimadue} we obtain that
  $$
  u(\cdot,t_n)\to u_\infty \qquad {\rm in\ } L^2_\cd(\Sigma).
  $$ 

  \paragraph{Step 4}
  Take $n$ big enough so that
  \begin{eqnarray}
    \label{eq:21}
    \|u(\cdot,t_n)- u_\infty\|_{L^2_\cd(\Sigma)}\le \eps \qquad\forall
    t\ge t_n\,, 
  \end{eqnarray}
  where $\eps$ is the same as the one corresponding to $\omega = 1$ in
  Theorem \ref{t:wnstab}. On the other hand, for every $\alpha' > 0$
  it is possible to choose $\delta \leq \alpha'$ in Proposition
  \ref{p:w2close}, such that the subsolution $u^-$ constructed there
  satisfies $u^-(\cdot, z, t) \geq v - \alpha'$ for all $z \leq \bar
  z_0$, with some $\bar z_0 \in \mathbb R$ independent of $\eps$ and
  $R$ in the definition of $\bar u^-$, and all $t \geq 0$, if $\alpha$
  is sufficiently small. Therefore, we have $\bar u^- \geq u^-$ in
  $\Omega \times (-\infty, \bar z_0] \times [0, +\infty)$, and since
  $\bar u^- \leq u$ for all $t \geq 0$, the same inequality holds for
  $u$. So we can apply Theorem \ref{t:wnstab} to $u(\cdot, t_n)$ in
  place of $u_0$ (also applying suitable translations in $z$ and $t$),
  and obtain
  \begin{equation}\label{eleq}
    \|u(\cdot, t)- u_\infty\|_{L^2_\cd(\Sigma)}\le e^{-\sigma (t - t_n)} 
  \end{equation}
  for some $\sigma > 0$ independent of $u_0$ and all $t \geq t_n$.

  \paragraph{Step 5} 
  We now demonstrate that the exponential convergence of \eqref{eleq}
  also holds in spaces of higher regularity. We first show this for
  $H^1_\cd(\Sigma)$, and then for $H^2_\cd(\Sigma)$. In the following,
  we denote by $\mathcal A: \mathcal D(\mathcal A) \to
  L^2_\cd(\Sigma)$ the sectorial operator $\mathcal A = \Delta +
  \cd \partial_z$, with domain $\mathcal D(\mathcal A) =
  H^2_\cd(\Sigma)$ dense in $L^2_\cd(\Sigma)$ (see also
  \cite{mn:cms08,lunardi}).

  Letting $w(\cdot,t):= u(\cdot, t)-u_\infty$, we have
  \begin{equation}\label{elasso}
    w_t = \mathcal A w + g(x, t) w,
  \end{equation}
  where $g(y,z,t) = f_u(\tilde u(y,z,t),y)$ for some $\tilde u$ such
  that $|\tilde u-u_\infty|\le |w|$, i.e., $g$ is such that $|g|\le
  C$, for some $C>0$.  As a consequence, by parabolic regularity
  theory \cite[Chapter 15]{taylor3} (see also \cite[Proposition 2.1.1
  and Theorem 3.1.1]{lunardi}) and recalling \eqref{eleq}, for all $t
  \geq 1$ we have
  \begin{equation}\label{eqevans}
  \begin{aligned}
    \| u(\cdot, t)-u_\infty\|_{H^1_\cd(\Sigma)}& \le C \left( \|
      w(\cdot,t-1)\|_{L^2_\cd(\Sigma)} + \int_{t-1}^t \frac{\|
        w(\cdot,s)\|_{L^2_\cd(\Sigma)}}{\sqrt{t-s}}\,ds \right)
    \\
    &\le C e^{-\sigma t},
  \end{aligned}
  \end{equation}
  for some $C>0$. In particular, from \cite[Proposition
  3.2]{lmn:arma08}, \eqref{eqevans}, the minimizing property of
  $u_\infty$ and hypothesis (H2) we get
  \begin{eqnarray}\label{eqfici}
    \Phi_\cd[u(\cdot,t)] &=& \Phi_\cd[u(\cdot,t)]-\Phi_\cd[u_\infty] 
    \nonumber \\ 
    &=& 
    \int_\Sigma e^{\cd z}\left(\frac{1}{2} |\nabla w|^2
      +V(u_\infty + w,y) - V(u_\infty,y) - V'(u_\infty, y) w \right)
    dx  
    \nonumber \\ 
    &\le& C \|w\|_{H^1_\cd(\Sigma)}^2 \le Ce^{-2 \sigma t},	
  \end{eqnarray}
  for all $t \geq t_0$, with any $t_0 > 0$ and some $C = C(t_0) > 0$.
   
  Let us now rewrite \eqref{elasso} in the form
  \begin{equation}\label{elassobis}
    w_t = \mathcal A w + h(\cdot, t), \qquad   h(\cdot,t):=
    f(u(\cdot,t))-f(u_\infty). 
  \end{equation}
  Recalling hypothesis (H2), \eqref{eqenergy} and \eqref{eqfici}, for
  all $t_2\ge t_1\ge t_0 > 0$ we have
  \begin{eqnarray*}
    \|h(\cdot,t_2)-h(\cdot,t_1)\|_{L^2_\cd(\Sigma)} &=& 
    \| f(u(\cdot,t_2))-f(u(\cdot,t_1))  \|_{L^2_\cd(\Sigma)} 
    \\
    &\le& C \| u(\cdot,t_2)-u(\cdot,t_1)\|_{L^2_\cd(\Sigma)} 
    \\
    &\le& C \int_{t_1}^{t_2}  \| u_t(\cdot,s)\|_{L^2_\cd(\Sigma)} ds
    \\
    &\le& C\sqrt{(t_2-t_1) \Phi_\cd[u(\cdot, t_1)]} \le
    C\sqrt{t_2-t_1}\, 
    e^{-\sigma t_1},  
  \end{eqnarray*}
  for some $C = C(t_0) > 0$.  Then, reasoning as in \cite[Theorem
  4.3.1]{lunardi} with $t_1 = t - 1$ and $t_2 = t$ and using
  \eqref{eqevans}, we have
  \begin{eqnarray}
    \label{H2cest}
    \|w(\cdot,t)\|_{H^2_\cd(\Sigma)}
    &\le& C (\|\mathcal A w(\cdot,t)\|_{L^2_\cd(\Sigma)} + ||w(\cdot,
    t)||_{H^1_\cd(\Sigma)}  )
    \nonumber \\
    &\le & C  \bigg( \| w(\cdot,t - 1)\|_{L^2_\cd(\Sigma)}
    + \| w(\cdot,t)\|_{H^1_\cd(\Sigma)}
    \nonumber \\
    && \qquad +
    \int_{t-1}^{t}\frac{\|h(\cdot,s)-h(\cdot,t)\|_{L^2_\cd(\Sigma)}} 
    {t-s} \, ds \bigg)
    \nonumber \\
    &\le& C\,e^{-\sigma t},
  \end{eqnarray}
  for all $t \geq t_0 + 1$, for any $t_0 > 0$ and some $C = C(t_0) >
  0$. In writing \eqref{H2cest}, we used the same reasoning as in the
  standard estimate of the $H^2$-norm of a function in terms of the
  $L^2$-norm of the Laplacian to obtain the inequality in the first
  line. This gives \eqref{eqmain} and concludes the proof of Theorem
  \ref{t:main}.
  \endproof


\section*{Acknowledgements}
The authors wish to thank the University of Tours and the Research
Institute \textit{Le Studium} for the kind hospitality and support
(M.N.). We would also like to acknowledge valuable discussions with
V. Moroz.


\bibliographystyle{siam}
 
\bibliography{../mura,../nonlin}

\end{document}